\numberwithin{equation}{section}
\def\e{\epsilon}
\def\R{\mathbb{R}}
\def\cL{\mathcal{L}}
\def\cD{\mathcal{D}}
\def\cP{\mathcal{P}}
\def\cU{\mathcal{U}}
\def\cA{\mathcal{A}}
\def\cS{\mathcal{S}}
\def\cX{\mathcal{X}}
\def\div{\mathrm{div}}
\def\loc{\mathrm{loc}}
\def\tr{\mathrm{tr}}
\def\l{\langle}
\def\r{\rangle}
\newtheorem{theorem}{Theorem}[section]
\newtheorem{lemma}{Lemma}[section]
\newtheorem{proposition}{Proposition}[section]
\newtheorem{remark}{Remark}[section]
\newtheorem{corollary}{Corollary}[section]
\newtheorem{definition}{Definition}[section]
\title{Existence of weak solutions to $p$-Navier-Stokes equations}
\author[a]{Yuanyuan Feng\thanks{
E-mail: yyfeng@math.ecnu.edu.cn}}
\author[b]{Lei Li\thanks{E-mail: leili2010@sjtu.edu.cn}}
\author[c]{Jian-Guo Liu\thanks{E-mail: jliu@math.duke.edu}}
\author[d]{Xiaoqian Xu\thanks{E-mail: xiaoqian.xu@dukekunshan.edu.cn}}
\affil[a]{School of  Mathematical Sciences, Shanghai Key Laboratory of PMMP, East China Normal University, Shanghai, 200241, P.R. China. }
\affil[b]{School of Mathematical Sciences, Institute of Natural Sciences, MOE-LSC, Shanghai Jiao Tong University, Shanghai, 200240, P.R.China.}
\affil[c]{Department of Mathematics, Department of Physics, Duke University, Durham, NC 27708, USA.}
\affil[d]{Zu Chongzhi Center for Mathematics and Computational Sciences, Duke Kunshan University, Kunshan, 215316, P.R.China}
\date{}
\begin{document}
\maketitle

\begin{abstract}
We study the existence of weak solutions to the $p$-Navier-Stokes equations with a symmetric $p$-Laplacian on bounded domains. We construct a particular Schauder basis in $W_0^{1,p}(\Omega)$ with divergence free constraint and prove existence of weak solutions using the Galerkin approximation via this basis.
Meanwhile, in the proof, we establish a chain rule for the $L^p$ norm of the weak solutions, which fixes a gap in our previous work. The equality of energy dissipation is also established for the weak solutions considered.  
\end{abstract}

\section{Introduction}

The system of Navier-Stokes equations is one of the most influential mathematical models in physical science and engineering fields \cite{tartar2006introduction}. The application of Navier-Stokes equations ranges from the design of a plane to weather forecasting. One of the Millennium Problems proposed by Clay Mathematics Institute is about the global existence of the smooth solutions to Navier-Stokes equations \cite{bblWebsite}, which remains one of the most important open questions in the field of partial differential equations \cite{lions1996mathematical}.

There are tons of models that are variants of the classical Navier-Stokes equations, typically for some Non-Newtonian fluids \cite{crochet2012numerical}. As an example, to study the shear thinning effect of the non-Newtonian flows, one could use the symmetric $p$-Laplacian term instead of Laplacian, and one may check \cite{vdbh2009,breit2017existence} for more discussion. 
In \cite{liliu17}, the authors proposed the $p$-Euler equations as the Euler-Lagrange equations from the Arnold's least action principle \cite{arnold1966geometrie,arnold2008topological}, for which the action is represented by the Benamou-Brenier characterization of the Wasserstein-$p$ distance between two shapes with the incompressibility constraint. By adding $p$-Laplacian diffusion to the equation, the so-called $p$-Navier-Stokes equations were proposed:
\begin{gather}\label{eq:pnsold}
\begin{split}
& \partial_t v_p+v\cdot\nabla v_p=-\nabla \pi+\nu \,\Delta_p v,\\ 
&v_p=|v|^{p-2}v,\quad \nabla\cdot v=0.
\end{split}
\end{gather}
Here, the $p$-Laplacian is given by $\Delta_p v=\nabla\cdot(|\nabla v|^{p-2}\nabla v)$, $|\nabla v|=\sqrt{\sum_{ij}(\partial_i v_j)^2}$.
Mathematically, the $p$-Navier-Stokes equations are analogues of the classical Navier-Stokes equations and exhibit many similar properties. In particular, 
when $p=2$, such a system becomes the classical Navier-Stokes equations. The generalization to general $p$, on the other hand, has some particular difficulty and fine structures. Due to the lack of Hilbert structure in $L^p$ space and the nonlinearity of all the terms in the differential equations, the analysis of such a system of differential equations is significantly more difficult than the classical problems.

In this paper, we are interested in the $p$-Navier-Stokes equations with the symmetric $p$-Laplacian arsing in the models for shear thinning effect \cite{vdbh2009} on a bounded domain $\Omega\subset \R^d$ with $C^{\infty}$ boundary $\partial \Omega$. In particular, we consider the initial-boundary value problem of the $p$-Navier-Stokes equations given by
\begin{gather}\label{pNSe}
	\left\{
	\begin{aligned}
		&\displaystyle\partial_t v_p+v\cdot\nabla v_p=-\nabla \pi+\nu \,\cL_p(v), & x\in\Omega,~t\in(0,T)\,,\\
		&v_p=|v|^{p-2}v,\quad \displaystyle\nabla\cdot v=0,
 &x\in\Omega,~t\in(0,T)\,,\\
		&\displaystyle v(x, 0)=v_0(x), &x\in\Omega\,,\\
		&\displaystyle v=0, & x\in\partial\Omega\,.
	\end{aligned}
	\right.
\end{gather} 
 Here, for the vector field $v$, the symmetric $p$-Laplacian $\cL_p(v)$ is 
\begin{gather}
\cL_p(v) = \div(|\mathcal{D}(v)|^{p-2}\mathcal{D}(v)),
\end{gather}
and
\begin{gather}
\cD(v)=\frac{1}{2}(\nabla v+\nabla v^{T}).
\end{gather}
We focus on the symmetric $p$-Laplacian because such a diffusion term appears in physical models \cite{vdbh2009}. We remark however that the analysis for the usual $p$-Laplacian diffusion $\Delta_p v$ would be similar (and in fact easier).  When $1<p<2$, $\Delta_p$  corresponds to fast diffusion. When $p>2$, it is the case corresponding to slow diffusion. One can check this in standard textbooks or references on $p$-Laplacian, for example \cite{vazquez2007porous}. One would expect the symmetric $p$-Laplacian term to exhibit similar diffusion effects.

The existence of weak solutions to the original $p$-Navier-Stokes equations \eqref{eq:pnsold} proposed in \cite{liliu17} has been explored in \cite{liliu17} and \cite{liu2021existence} by totally different methods. In \cite{liliu17}, a regularized system was proposed for the approximation and for the existence of weak solutions. Meanwhile, in \cite{liu2021existence}, the authors used the discrete time scheme to prove the existence of the weak solution. We remark that there are some minor gaps in the proof in \cite{liliu17}. For example, the well-posedness of the regularized system was taken for granted; second, the chain rule was not established rigorously as detailed in Section \ref{subsec:timereg}.

In this article, we focus on the equations \eqref{pNSe} with symmetric $p$-Laplacian on a bounded domain and establish the existence of weak solutions rigorously using a totally different method, the Galerkin approximation.
The reasons are as follows. First, the symmetric $p$-Laplacian is more frequently used for non-Newtonian fluids. Second, the well-posedness of the Galerkin system can be established rigorously. Moreover, we also aim to fill the gaps for the existence in the previous work.  The Galerkin method, or Galerkin approximation, is a very common method in numerical analysis as well as in applied analysis, especially for finding the local existence of the weak solutions to a particular differential equation. One can check more details in standard textbooks, for example \cite{evans2022partial}. In order to use the Galerkin approximation, one may need to choose a Hilbert space or a Banach space and find a Schauder basis. In this paper, by the natural structure of our differential equations \eqref{pNSe}, one has to use $L^p(\Omega)$ and $W_0^{1,p}(\Omega)$ spaces with divergence-free constraint as the reference spaces.  The study of the existence of Schauder basis on such spaces can be tracked back to \cite{fuvcik1972existence}. Later, in \cite{bellout95}, by connecting with the Haar system in one-dimensional case, the authors constructed a special Schauder basis with orthogonality properties on $W_0^{1,p}(\Omega)$. As we shall see later, due to the boundary condition, the Leray projection cannot be used directly to obtain the basis for the subspaces with divergence-free constraint. For the self-consistency of this paper, we construct a Schauder basis based on the eigenfunctions of a compact operator. The significance of this work can be summarized as follows. First, the existence of weak solutions is established rigorously using the Galerkin approximation fixing the previous gaps. In the proof, a chain rule for the $L^p$ integral of the weak solution is proved using the finite difference approximation, and this is also used to show the energy dissipation equality. Note that this technique can also be used to fill in the gap in \cite{liliu17} for the original model in $\R^d$. Second, a Schauder basis is constructed explicitly for $W_0^{1,p}(\Omega)$ with divergence free constraint. This can be used further for other models in $L^p$ type spaces. 

The structure of this paper is as follows. In Section \ref{2}, we introduce the notations and definitions in this paper. In Section \ref{3}, we construct a Schauder basis in the Sobolev spaces with divergence-free constraints. The basis consists of eigenfunctions of the projected high order elliptic operator in the space with divergence free constraint. In Section \ref{4}, we use the Galerkin approximation and run the compactness argument. In Section \ref{5}, we finish the proof of the main theorem. Here, the chain rule is established using finite time differences.

\section{Notations and definitions}\label{2}

Fix $\Omega\subset\mathbb{R}^d$ simply connected, bounded with $C^{\infty}$ boundary $\partial \Omega$. In the rest of this paper, we assume 
\begin{gather}
	p\geq d\geq 2.
\end{gather}

To make this paper self-consistent, we recall some notations in tensor analysis. Let $a, b\in \mathbb{R}^d$ be vectors and $A, B$ be matrices. We define $a\otimes b$ be a matrix, called the tensor product of $a$ and $b$:
\begin{gather}\label{eq:tensorproduct}
(a\otimes b)_{ij}=a_ib_j.
\end{gather}
We also define the dot product for vectors and matrices as
\begin{gather}\label{eq:tensordot}
\begin{split}
& (a\cdot A)_i=\sum_{j=1}^d a_j A_{ji},~~(A\cdot a)_i=\sum_{i=1}^d A_{ij}a_j, \\
& (A\cdot B)_{ij}=\sum_{k}A_{ik}B_{kj},\\
&A:B=\tr(A\cdot B^T)=\sum_{ij}A_{ij}B_{ij}.
\end{split}
\end{gather}

\subsection{The weak solutions}

To incorporate initial values in the definition of weak solution to \eqref{pNSe}, we introduce the following definition.
\begin{definition}\label{weaktime}
A function $f\in L^1[0,T]$ is said to have the weak time derivative $w\in (C_c^{\infty}[0,T))'$ with initial value $f_0$ if
\[
\int_0^T\phi w\,dt=-\int_0^T\phi' f\,dt-\phi(0)f_0,\text{~~}\forall \phi\in C_c^{\infty}([0,T)).
\]
A function  $f\in L_{\loc}^1(\Omega\times[0,T])$ is said to have weak time derivative $w\in (C_c^{\infty}(\Omega\times[0,T)))'$ and initial data $f_0(x)\in L_{\loc}^1(\Omega)$ if
\[
\int_0^T\int_{\Omega}\phi w\,dxdt=-\int_0^T\int_{\Omega}\partial_t\phi f \,dxdt-\int_{\Omega}\phi(x,0)f_0(x)\,dx,\text{~~}\forall \phi\in C_c^{\infty}(\Omega\times[0,T)).
\]
\end{definition}

We define the bounded trace operator $\mathcal{T}_r: W^{1,p}(\Omega;\mathbb{R}^d) \to L^p(\partial\Omega;\mathbb{R}^d)$, such that $\mathcal{T}_r(u)=u|_{\partial \Omega}$, for any $u\in C^{\infty}(\Omega;\mathbb{R}^d)$. Such operator is unique, one can check this fact from standard PDE textbooks, for instance \cite{evans2022partial}. We then denote $W^{1,p}_0(\Omega;\mathbb{R}^d)$ to be all the functions $u$ in $W^{1,p}(\Omega;\mathbb{R}^d)$ such that $\mathcal{T}_r(u)=0$. One could verify that the space $W^{1,p}_0(\Omega;\mathbb{R}^d)$ is the completion of $C^{\infty}_0(\Omega;\mathbb{R}^d)$ under the $W^{1,p}(\Omega;\mathbb{R}^d)$ norm. We further use $W^{-1,q}(\Omega;\mathbb{R}^d)$ to denote the the dual space of $W_0^{1,p}(\Omega; \mathbb{R}^d)$, where $q$ satisfies $\frac{1}{p}+\frac{1}{q}=1$.  

Following from Poincar\'{e} inequality, $\|v\|_{L^p}\leq c\|\nabla v\|_{L^p}$, for $v\in W_0^{1,p}(\Omega; \mathbb{R}^d)$. As a consequence, the norm in $W_0^{1,p}(\Omega; \mathbb{R}^d)$ is equivalent to $\|\nabla v\|_{L^p}+\|v\|_{L^p}$. Such norm is also equivalent to  $\|\mathcal{D}(v)\|_{L^p}+\|v\|_{L^p}$, for which the proof can be found in \cite{carlos1992}. Here we state the fact as the following lemma and prove it in Appendix \ref{app:equivnorm} for completeness.
\begin{lemma}
 There exist two positive constants $C_1, C_2$, such that for any $v\in W^{1,p}_0(\Omega;\R^d)$,
\begin{align}\label{eq:eqnorm}
C_1 (\|v\|_{L^p(\Omega)}+\|\nabla v\|_{L^p(\Omega)})\leq\|v\|_{L^p(\Omega)}+\|\mathcal{D}(v)\|_{L^p(\Omega)}\leq C_2 (\|v\|_{L^p(\Omega)}+\|\nabla v\|_{L^p(\Omega)})\,.
\end{align}
\end{lemma}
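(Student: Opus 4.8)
The plan is to prove the two inequalities separately; the right-hand one is elementary and the entire difficulty sits in the left-hand one, which is exactly a Korn-type inequality in $L^p$.

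For the upper bound I would simply note that, since $\cD(v)=\frac12(\nabla v+\nabla v^{T})$, the triangle inequality for the Frobenius norm gives the pointwise estimate $|\cD(v)|\le\frac12(|\nabla v|+|\nabla v^{T}|)=|\nabla v|$. Raising to the $p$-th power and integrating over $\Omega$ yields $\|\cD(v)\|_{L^p}\le\|\nabla v\|_{L^p}$, so the right inequality holds with $C_2=1$. It then remains only to produce a constant $C$ with $\|\nabla v\|_{L^p(\Omega)}\le C\|\cD(v)\|_{L^p(\Omega)}$, since the left inequality follows from this with $C_1=1/\max(1,C)$ after adding $\|v\|_{L^p}$ to both sides.

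For the lower bound I would first reduce to the whole space. Because $v\in W^{1,p}_0(\Omega;\R^d)$ is a $W^{1,p}$-limit of functions in $C_0^\infty(\Omega;\R^d)$, its extension by zero $\tilde v$ lies in $W^{1,p}(\R^d;\R^d)$, and $\nabla\tilde v$, $\cD(\tilde v)$ are the zero extensions of $\nabla v$, $\cD(v)$, so the relevant $L^p$ norms are unchanged. The key algebraic observation is the identity (valid for smooth $v$, then in the sense of distributions)
\[
\partial_k\partial_l v_j=\partial_k\cD(v)_{lj}+\partial_l\cD(v)_{kj}-\partial_j\cD(v)_{kl},
\]
which expresses all second derivatives of $v$ through first derivatives of $\cD(v)$. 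Taking the Fourier transform, contracting $l=k$ and solving for $\hat v_j$ (using $\tr\cD(v)=\nabla\cdot v$), one arrives at the multiplier representation
\[
\widehat{\partial_m v_j}(\xi)=\frac{2\xi_m\xi_k}{|\xi|^2}\,\widehat{\cD(v)_{kj}}(\xi)-\frac{\xi_m\xi_j}{|\xi|^2}\,\widehat{\tr\cD(v)}(\xi),
\]
with summation over the repeated index $k$.

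The final step is to invoke the $L^p$ boundedness of the operators with symbols $\xi_m\xi_k/|\xi|^2$. These symbols are homogeneous of degree zero and smooth away from the origin — up to sign they are the compositions $R_mR_k$ of Riesz transforms — so by the Mikhlin--H\"ormander multiplier theorem they are bounded on $L^p(\R^d)$ for every $1<p<\infty$, which covers our range $p\ge d\ge 2$. This gives $\|\nabla\tilde v\|_{L^p(\R^d)}\le C\|\cD(\tilde v)\|_{L^p(\R^d)}$, and restricting back to $\Omega$ yields the desired Korn inequality. I expect this last step to be the main obstacle: since $p\ne 2$ there is no Plancherel identity, and the bound cannot be read off from the symbol directly, so one genuinely needs Calder\'on--Zygmund theory (equivalently, the $L^p$ boundedness of singular integrals) rather than the elementary integration-by-parts argument available in the Hilbert case $p=2$.
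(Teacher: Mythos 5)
Your proof is correct, and it reaches the conclusion by a genuinely different route than the paper's. Both arguments hinge on the same algebraic identity $\partial_k\partial_l v_j=\partial_k\cD(v)_{lj}+\partial_l\cD(v)_{kj}-\partial_j\cD(v)_{kl}$, but the paper uses it on the bounded domain: it concludes that the second derivatives of $v$ lie in $W^{-1,p}(\Omega)$ and then applies the Ne\v{c}as-type inequality $\|f\|_{L^p}\le C(\|f\|_{W^{-1,p}}+\|\nabla f\|_{W^{-1,p}})$ from \cite{carlos1992}, finishing with the Open Mapping Theorem; that argument never uses the zero trace, so it in fact proves Korn's second inequality on all of $W^{1,p}(\Omega)$. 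You instead exploit $v\in W^{1,p}_0(\Omega;\R^d)$ in an essential way: extension by zero to $\R^d$ (which is exactly what the zero trace licenses) converts the identity into the degree-zero homogeneous multiplier representation you wrote --- I checked the symbol computation, including the factor $2$ and the $\tr\cD(v)$ term obtained by contracting $l=k$, and it is correct --- so Mikhlin--H\"ormander/Riesz-transform boundedness yields the homogeneous first Korn inequality $\|\nabla v\|_{L^p(\Omega)}\le C\|\cD(v)\|_{L^p(\Omega)}$ with $C=C(p,d)$ independent of $\Omega$, which is strictly stronger than the lemma requires (no lower-order term $\|v\|_{L^p}$ is needed). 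The trade-off: your route is confined to zero-trace fields and imports Calder\'on--Zygmund theory, while the paper's route covers general $W^{1,p}$ fields and does not need extension by zero, at the price of the less elementary Ne\v{c}as lemma. The one step to phrase carefully in a final write-up is the passage from smooth fields to general $v\in W^{1,p}_0$: derive the multiplier identity for $v\in C_c^\infty(\R^d;\R^d)$, where dividing by $|\xi|^2$ away from the origin is harmless because the quotients recombine into the smooth homogeneous symbols $\xi_m\xi_k/|\xi|^2$, and then conclude by density, since both sides of the resulting inequality are continuous in the $W^{1,p}$ norm; your parenthetical ``valid for smooth $v$, then in the sense of distributions'' indicates you intend exactly this, and your closing diagnosis is apt --- for $p\neq 2$ the boundedness of the double Riesz transforms is genuinely singular-integral theory rather than a Plancherel identity.
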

The action of symmetric $p$-Laplacian can be considered as an operator $\mathcal{L}_p$: $W_0^{1,p}(\Omega; \mathbb{R}^d)\to W^{-1,q}(\Omega; \mathbb{R}^d)$.  For $u,v \in W_0^{1,p}(\Omega; \mathbb{R}^d)$, define 
\begin{equation}\label{Deltap}
\langle \mathcal{L}_p(u), v\rangle :=-\int_\Omega|\mathcal{D}(u)|^{p-2}\mathcal{D}(u):\nabla v dx\,.
\end{equation}
In addition, by H\"older's inequality, one has 
\[
|\langle \mathcal{L}_p(u), v\rangle|=\left|-\int_\Omega |\mathcal{D}(u)|^{p-2} \mathcal{D}(u) : \nabla v dx\right|\leq\|\mathcal{D}(u)\|_p^{p-1}\|\nabla v\|_p,
\]
for $u, v \in W_0^{1,p}(\Omega; \mathbb{R}^d)$. Hence,
 \[
\int_0^T\|\mathcal{L}_p(u)\|_{W^{-1,q}(\Omega; \mathbb{R}^d)}^qdt\leq\int_0^T(\|\mathcal{D}(u)\|_p^{p-1})^{q}dt=\int_0^T\|\mathcal{D}(u)\|_p^pdt
\]
for $u \in L^p(0,T; W_0^{1,p}(\Omega; \mathbb{R}^d))$. This means that $\mathcal{L}_p$ maps bounded sets in $L^p(0,T; W_0^{1,p}(\Omega; \mathbb{R}^d))$ to bounded sets in $L^{q}(0,T; W^{-1,q}(\Omega; \mathbb{R}^d))$. 

To motivate the definition of weak solutions, let us perform some formal estimate ( {\it a priori} estimate).
Multiplying $v$ on both sides of the first equation in \eqref{pNSe} and integrating over space and time, one has 
\begin{gather}\label{eq:energyapriori}
\int_\Omega|v|^p(x,T)dx-\int_\Omega|v|^p(x,0)dx=-q\nu\int_0^T\int_\Omega|\mathcal{D}(v)|^pdxdt.
\end{gather}
Hence, if $v_0 \in L^p(\Omega;\mathbb{R}^d)$, one is expected to have
\begin{gather}
v\in L^{\infty}(0,T, L^p(\Omega;\mathbb{R}^d))\cap L^p(0,T; W_0^{1,p}(\Omega;\mathbb{R}^d)).
\end{gather}
We remark that the energy dissipation equality \eqref{eq:energyapriori} often reduces to inequality for weak solutions (we will show this equality holds later for our weak solutions). Nevertheless, the regularity of the $v$ with this {\it a priori} estimate is expected to hold.
Let $\hat v$ be the unit vector with the same direction as $v$. Based on the observation $\nabla v_p=|v|^{p-2}\nabla v\cdot (I+(p-2)\hat{v}\otimes\hat{v})$, we immediately obtain that in the case $p>2$
\begin{gather}
v_p\in L^{\infty}(0,T; L^q(\Omega; \mathbb{R}^d))\cap L^q(0,T; W_0^{1,q}(\Omega; \mathbb{R}^d))\,.
\end{gather}

By Definition \ref{weaktime}, with these a $priori$ estimates, it is natural for us to define the weak solutions to the initial-boundary $p$-Navier-Stokes problems as follows.
\begin{definition}\label{def:weak}
Given $v_0\in L^p(\Omega;\mathbb{R}^d)$ with $\int_{\Omega}\nabla \psi\cdot v_0\,dx=0$ for all $\psi\in C^{\infty}(\overline{\Omega}; \R)$, we say $v\in L^{\infty}(0,T, L^p(\Omega;\mathbb{R}^d)\cap L^p(0,T; W_0^{1,p}(\Omega;\mathbb{R}^d))$ is a weak solution of the $p$-Navier-Stokes problem (Equation (\ref{pNSe})) with initial value $v_0$, if 
\begin{equation}
\lim_{h\rightarrow 0^+}\int_0^{T-h}\|v(t+h)-v(t)\|_{L^p(\Omega;\mathbb{R}^d)}^pdt=0,
\end{equation}
and for any $\varphi\in C_c^{\infty}(\Omega\times [0,T);\mathbb{R}^d)$, $\nabla \cdot \varphi=0$, $\psi\in C_c^{\infty}(\overline{\Omega}\times[0,T);\mathbb{R})$, we have
\begin{gather}\label{ws}
\begin{split}
&\int_0^T\int_{\Omega}v_p\cdot \partial_t \varphi\, dxdt+\int_0^T\int_{\Omega}\nabla\varphi : (v\otimes v_p)dxdt-\nu\int_0^T\int_{\Omega}\nabla\varphi : \mathcal{D}(v)|\mathcal{D}(v)|^{p-2}dxdt\\
&+\int_{\Omega}|v_0|^{p-2}v_0\cdot\varphi(x,0)dx=0,\\
&\int_0^T\int_{\Omega}\nabla\psi\cdot v\, dxdt=0.
\end{split}
\end{gather}
If $v\in L_{\loc}^{\infty}(0,\infty;L^p(\Omega;\mathbb{R}^d))\cap L_{\loc}^p(0,\infty; W_0^{1,p}(\Omega;\mathbb{R}^d))$ and \eqref{ws} holds with $\infty$ instead of $T$
for all $\varphi \in C_c^{\infty}(\Omega\times[0,\infty);\mathbb{R}^d)$ with $\nabla \cdot \varphi=0$ and $\psi \in C_c^{\infty}(\overline{\Omega}\times[0,\infty);\mathbb{R})$, we say $v$ is a global solution.
\end{definition}
Above, following \eqref{eq:tensorproduct} and \eqref{eq:tensordot}, the double dots are interpreted as 
\begin{gather}
\begin{split}
&\nabla \varphi:(v\otimes v_p)=\sum_{ij}\partial_i\varphi_jv_i(v_p)_j,\\
&\nabla\varphi:\cD (v)=\sum_{ij}\partial_i\varphi_j(\cD(v))_{ij}.
\end{split}
\end{gather}

\subsection{The working subspaces}

For the convenience of the discussion, we aim to incorporate the divergence-free constraint into the working spaces. In particular, we need to seek a solution $v$ in the subspaces of $L^p(\Omega; \R^d)$ and $W_0^{1,p}(\Omega)$ with certain divergence-free constraints. Define
\begin{gather}
\cU:=\{\phi\in C_c^{\infty}(\Omega;\mathbb{R}^d): \div\phi=0\}.
\end{gather}
For the space $L^p(\Omega; \R^d)$, we recall the \textit{Helmohotlz-Weyl} decomposition \cite{galdi2011}.   Denote $U_p(\Omega)$ the $L^p$-completion of the space $\cU$, which is given by
 \begin{gather}\label{Up}
 U_p(\Omega)=\left\{w\in L^p(\Omega;\mathbb{R}^d): \int_{\Omega} w\cdot\nabla\varphi\,dx=0, \forall \varphi \in C^1(\overline{\Omega})\right\}.
 \end{gather}
 This is the weak form of $\{w\in L^p(\Omega;\mathbb{R}^d): \nabla\cdot w=0\text{ in } \Omega,\, w\cdot n=0
\text{ on } \partial \Omega\}$, where $n$ represents normal vector field on the boundary.  Let $G_p(\Omega)=\{w\in L^p(\Omega,\mathbb{R}^d): \exists \varphi \in W_{\mathrm{loc}}^{1,p}(\Omega), w=\nabla\varphi\}$. Theorem \uppercase\expandafter{\romannumeral3}.1.2 and relevant results in \cite{galdi2011} can be summarized as the following lemma:
\begin{lemma}\label{Helm}
	Let $\Omega \subset \mathbb{R}^d$, $d\geq 2$ be either a domain of class $C^2$ or the whole space or a half space, then the Helmholtz-Weyl decomposition holds,
    \begin{gather}
	L^p(\Omega; \R^d)=U_p(\Omega)\oplus G_p(\Omega)\,,
	\end{gather}
  where $\oplus$ denotes direct sum. This defines the Leray projection operator $\mathcal{P}: L^p(\Omega; \R^d)\rightarrow U_p(\Omega)$. There is a constant $C(p,\Omega)$ such that for any $w\in L^p(\Omega;\R^d)$, 
	\begin{gather}
	\|\mathcal{P}w\|_p\leq C(p,\Omega)\|w\|_p.
	\end{gather}
\end{lemma}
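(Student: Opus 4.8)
The plan is to reduce the direct-sum decomposition to the solvability of a weak Neumann problem and then read off both the projection $\cP$ and its $L^p$ bound from the corresponding elliptic estimate. Given $w\in L^p(\Omega;\R^d)$, I would seek a scalar potential $\varphi$ solving, in the weak sense,
$$\int_\Omega \nabla\varphi\cdot\nabla\psi\,dx=\int_\Omega w\cdot\nabla\psi\,dx,\qquad\forall\,\psi\in C^1(\overline{\Omega}),$$
which is the variational form of the Neumann problem $\Delta\varphi=\div w$ in $\Omega$, $\partial_n\varphi=w\cdot n$ on $\partial\Omega$. Setting $\cP w:=w-\nabla\varphi$, the defining identity forces $\int_\Omega(\cP w)\cdot\nabla\psi\,dx=0$ for every test function, so $\cP w\in U_p(\Omega)$ by the characterization \eqref{Up}, while $\nabla\varphi\in G_p(\Omega)$ by definition. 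Thus every $w$ splits as $w=\cP w+\nabla\varphi$ with the two pieces landing in $U_p$ and $G_p$ respectively.

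For existence and the quantitative estimate I would split into two regimes. When $p=2$ the bilinear form $(\varphi,\psi)\mapsto\int_\Omega\nabla\varphi\cdot\nabla\psi\,dx$ is coercive on $W^{1,2}(\Omega)$ modulo constants, so Lax--Milgram yields a unique $\varphi$ (up to an additive constant) with $\|\nabla\varphi\|_2\le\|w\|_2$; this is the orthogonal Hilbert-space projection, and the decomposition is manifestly orthogonal. For general $p$ the estimate $\|\nabla\varphi\|_p\le C(p,\Omega)\|w\|_p$ is exactly the content of the $L^p$ regularity theory for the Neumann problem on a $C^2$ domain (Agmon--Douglis--Nirenberg / Calder\'on--Zygmund estimates); granting it, the bound $\|\cP w\|_p\le(1+C(p,\Omega))\|w\|_p$ follows from the triangle inequality. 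In the whole-space and half-space cases the potential is explicit, $\nabla\varphi=\nabla\Delta^{-1}\div w$, whose components are compositions of Riesz transforms, so the $L^p$ bound comes directly from their boundedness (the half-space via reflection).

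Uniqueness of the splitting, i.e.\ $U_p(\Omega)\cap G_p(\Omega)=\{0\}$, is the remaining algebraic point. If $\nabla\varphi\in U_p(\Omega)$ then $\int_\Omega\nabla\varphi\cdot\nabla\psi\,dx=0$ for all $\psi\in C^1(\overline{\Omega})$; approximating $\varphi$ in $W^{1,p}$ by such $\psi$ and testing against $\varphi$ itself forces $\|\nabla\varphi\|_p=0$, hence $\nabla\varphi=0$. Combined with the decomposition above this gives the direct sum $L^p(\Omega;\R^d)=U_p(\Omega)\oplus G_p(\Omega)$ and shows that $\cP$ is well defined, linear and bounded.

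The main obstacle is the $L^p$ estimate for $p\neq2$: the case $p=2$ is elementary Hilbert-space projection, but the uniform gradient bound for the Neumann potential in $L^p$ genuinely requires singular-integral (Calder\'on--Zygmund) machinery, and this is precisely where the $C^2$ regularity of $\partial\Omega$ enters. Since this technical core is already packaged in Theorem III.1.2 of \cite{galdi2011}, the cleanest route for a self-contained presentation is to invoke that result directly rather than reprove the singular-integral bounds from scratch.
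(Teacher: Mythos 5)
The paper itself gives no proof of this lemma: it is stated explicitly as a summary of Theorem III.1.2 of \cite{galdi2011}, and since your proposal, after sketching the standard weak-Neumann-problem construction ($\Delta\varphi=\div w$, $\partial_n\varphi = w\cdot n$ in variational form, $\cP w := w-\nabla\varphi$), concludes that the technical core should be delegated to exactly that citation, you are in essence taking the same route, with your sketch being a correct account of how that cited result is proved. One caution if you ever expand the sketch into a full argument: your uniqueness step (approximating $\varphi$ and ``testing against $\varphi$ itself'') only literally works when the pairing $\int_\Omega\nabla\varphi\cdot\nabla\psi_n\,dx$ converges --- e.g.\ for $p\ge 2$ on a bounded domain, where $L^p(\Omega)\subset L^2(\Omega)$, which is the regime this paper actually uses --- whereas for $1<p<2$ or for the whole and half spaces it is not a mere ``algebraic point'' but requires the Simader--Sohr duality estimate $\|\nabla\varphi\|_p\le C\sup_{\psi\neq 0}\bigl|\int_\Omega\nabla\varphi\cdot\nabla\psi\,dx\bigr|/\|\nabla\psi\|_q$, i.e.\ precisely the singular-integral machinery you already propose to import from \cite{galdi2011}.
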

This says that  any  $w\in L^p(\Omega;\mathbb{R}^d)$ can be uniquely decomposed as
 \begin{align*}
 w=w_1+w_2\,,
\end{align*}
 with $w_1\in U_p(\Omega)$, and $w_2\in G_p(\Omega)$ and thus $w_1=\cP w$.   The decomposition here is the so-called Helmholtz-Weyl decomposition.
 \begin{remark}
The boundary condition matters. For example, $\phi=(-y, x)$ is divergence free in $\Omega=\{(x, y): 2x^2+y^2<1\}$ but $\phi\cdot n\neq 0$ on $\partial\Omega$. Then, $\cP\phi\neq \phi$ since $\int_{\Omega} \nabla \varphi\cdot \phi\,dx\neq 0$ for some $\varphi$.
\end{remark}

 For $W_0^{1,p}$, since the weak derivatives are well-defined, we can introduce directly
\begin{gather}
W :=\left\{v\in W_0^{1,p}(\Omega;\mathbb{R}^d): \nabla \cdot v=0 \right\}.
\end{gather}
Here $\nabla\cdot$ means divergence in the weak sense. Moreover, we will also use
\begin{gather}\label{spV}
V:=L^p(0, T; W),
\end{gather}
equipped with the $L^p(0, T; W^{1,p})$ norm.

\begin{remark}
As in Lemma \ref{Helm}, for any $\phi \in W_0^{1,p}$, the Helmholtz-Weyl decomposition of $\phi$ is given by
\[
\phi=\cP\phi+\nabla\varphi,
\]
	where $\cP\phi\in U_p(\Omega)$, $\nabla \varphi \in G_p(\Omega)$, and $\varphi$ is unique up to a constant. Then, $\varphi$ can be determined by the following Poisson equation
	\begin{gather*} 
 \left\{
\begin{split}
&\Delta \varphi=\nabla\cdot \phi \text{~~~in }\Omega,\\
&\frac{\partial \varphi}{\partial n}=0 \text{~~~on }\partial \Omega.
\end{split}
 \right.
	\end{gather*}
Clearly, one has $\cP \phi \in W^{1,p}$ by the elliptic regularity. Unfortunately, the boundary value of $\cP$ is not necessarily zero (only the normal component is zero).
Hence, the projection of $v\in W_0^{1,p}$ onto $W$ {\it cannot} be simply obtained using the Leray projection.
\end{remark}

With the spaces in hand, clearly, we will then seek solutions in 
$L^{\infty}(0, T; U_p(\Omega))\cap V$.
For the aim of this purpose, we need a Schauder basis for $U_p(\Omega)$ and $W$.

\section{A Schauder basis}\label{3}

The existence of the Schauder basis of $W_0^{1,p}$ is well-known (see, for example \cite{bellout95}). However, as commented in the last section, one can not simply use the Leray projection $\cP$ to obtain a basis for $W$.
In this section, we will show that the eigenfunctions in certain spaces of $\cP\Delta^m$, if $m$ large enough, will form a Schauder basis for both $U_p(\Omega)$ and $W$.
 
We first of all consider the following elliptic problem
\begin{gather}\label{ellp}
\begin{split}
&(-1)^m\cP \Delta^m u=f,\\
& \Delta^s u|_{\partial \Omega}=0,\quad \frac{\partial}{\partial n}\Delta^{\ell}u|_{\partial\Omega}=0, \quad \forall s\in S, ~\forall\ell\in L.
\end{split}
\end{gather}
Here $f\in U_2(\Omega)$, the space which is $L^2$-completion of divergence-free smooth functions as defined in \eqref{Up}. If $m=2k$, then $S=\{s\in\mathbb{Z}:0\le s\le k-1\}$ and $L=\{\ell\in\mathbb{Z}:0\le \ell\le k-1\}$; if $m=2k+1$, then $S=\{s\in\mathbb{Z}:0\le s\le k\}$, $L=\{\ell\in\mathbb{Z}:0\le \ell \le k-1\}$. 
We use $U_2(\Omega)$ here to make use of its Hilbert structure. The domain of the operator $\mathcal{A}=(-1)^m \cP \Delta^m: U_2(\Omega)\to U_2(\Omega)$ is given by
\[
\cD(\cA)=H^{2m}\cap  \tilde{H}^m,
\]
where
\begin{gather*}
\tilde{H}^m=\left\{u\in H^m: \mathrm{div}(u)=0,~\Delta^s u|_{\partial\Omega}=0, ~\frac{\partial}{\partial n}\Delta^{\ell}u|_{\partial\Omega}=0, ~\forall s\in S,~\forall \ell\in L \right\}.
\end{gather*}
We remark that $\cU$ is not dense in Hilbert space $\tilde{H}^m$ because the completion of $\cU$ is $\{u\in H_0^m: \mathrm{div}(u)=0\}$.

Now, consider the weak solution to the problem \eqref{ellp}. The associated bilinear form $B: \tilde{H}^m\times \tilde{H}^m \to \mathbb{R}$ is given by
\begin{gather}\label{defB}
B[u, v]=
\begin{cases}
\int \Delta^k u \Delta^kv\,dx, & m=2k,\\
\int \nabla \Delta^k u \cdot \nabla \Delta^k v\,dx, & m=2k+1.
\end{cases}
\end{gather}
A weak solution $u\in \tilde{H}^m$ is the one for which
\[
B[u, v]=\int_{\Omega}fv\,dx, \quad \forall v\in \tilde{H}^m.
\]
We remark that this definition of weak solution is consistent with problem \eqref{ellp}.
In fact, a weak solution is called a strong solution if the left hand side is a locally integrable function and \eqref{ellp} holds for a.e. $x$.   If a weak solution $u\in H^{2m}$, then integration by parts gives
\[
\int_{\Omega}(-1)^m\Delta^m u v\,dx= \int_{\Omega}(-1)^m\cP\Delta^m u v\, dx=\int_{\Omega} fv\,dx,
\]
for all $v\in \tilde{H}^m$. Hence, $u$ is a strong solution of \eqref{ellp}.

By the Lax-Milgram theorem (see \cite[Chapter 6]{evans2022partial}), the existence and uniqueness of the weak solution hold. Hence the solution map 
\[
\cS:=((-1)^m\cP\Delta^m)^{-1}: U_2(\Omega)\to \tilde{H}^m \subset U_2(\Omega)
\]
is well-defined.

\begin{remark}
The bilinear form for the equation $(-1)^m\Delta^m u=f$ with the same boundary conditions has the same expression, but the domain is $\hat{H}^m\times \hat{H}^m$ with $\hat{H}^m=\{u\in H^m, ~\Delta^s u|_{\partial\Omega}=0, ~\frac{\partial}{\partial n}\Delta^{\ell}u|_{\partial\Omega}=0, ~\forall s\in S, ~\forall \ell\in L\}$.
Note that though $(-1)^m\cP\Delta^m$ and $(-1)^m\Delta^m$ agree on $\cU$, they do not agree as maps $\tilde{H}^m\to (\hat{H}^m)'$ (they are identical as $\tilde{H}^m \to (\tilde{H}^m)'$ though). Here prime means the dual space.  This suggests that $((-1)^m\cP \Delta^m)^{-1}f$ is different from $((-1)^m  \Delta^m)^{-1}f$ as elements in $\hat{H}^m$ when $f\in U_2(\Omega)$. In particular, $((-1)^m  \Delta^m)^{-1}f$ may not be divergence free even if $f$ is.
\end{remark}

\begin{proposition}\label{pro:basis}
The eigenfunctions of $\cS$ in $U_2(\Omega)$ form a Schauder basis for both $U_p\Omega)$ and $W$
if $m$ is sufficiently large.
\end{proposition}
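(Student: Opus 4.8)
The plan is to recognize $\cS$ as a compact, self-adjoint, positive operator on the Hilbert space $U_2(\Omega)$, extract its $L^2$-orthonormal eigenbasis, and then promote the orthonormal $L^2$-expansion to a genuine Schauder expansion in $U_p(\Omega)$ and $W$ by separately establishing density of the span and uniform boundedness of the partial-sum projections in the $L^p$ and $W^{1,p}$ norms.

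First I would record the spectral structure. The map $\cS=\cA^{-1}$ takes values in $\tilde{H}^m$, and $\tilde{H}^m\hookrightarrow U_2(\Omega)$ is compact by Rellich--Kondrachov, so $\cS:U_2(\Omega)\to U_2(\Omega)$ is compact; the symmetry of $B$ in \eqref{defB} makes $\cS$ self-adjoint, and coercivity of $B$ (via Lax--Milgram) makes it positive. The spectral theorem then gives an $L^2$-orthonormal basis $\{e_k\}$ of $U_2(\Omega)$ with $\cS e_k=\lambda_k e_k$, $\lambda_k\searrow 0$, equivalently $\cA e_k=\mu_k e_k$ with $\mu_k=\lambda_k^{-1}\nearrow\infty$. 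A bootstrap with elliptic regularity for $(-1)^m\cP\Delta^m$ under the boundary conditions in \eqref{ellp} shows $e_k\in C^\infty(\overline\Omega)$; since $0\in S$, each $e_k$ is divergence free and vanishes on $\partial\Omega$, hence $e_k\in\tilde{H}^m\subset W\subset U_p(\Omega)$.

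Next I would reduce the statement to two properties through the standard characterization of Schauder bases: density of $\mathrm{span}\{e_k\}$, together with a uniform bound $\sup_N\|P_N\|<\infty$ for the partial-sum projections $P_N u=\sum_{k\le N}\langle u,e_k\rangle_{L^2}\,e_k$ in the relevant norm. Observe that on $U_p(\Omega)$ and on $W$ these $L^2$-functionals are forced: since $p\ge d\ge 2$ and $\Omega$ is bounded, $U_p(\Omega)\subset U_2(\Omega)\subset U_q(\Omega)$, so any norm-convergent expansion must agree with the $L^2$-orthogonal one. Density is the easy half: because $B[e_k,e_k]=\mu_k$ and $B\simeq\|\cdot\|_{H^m}^2$ on $\tilde{H}^m$ by the elliptic estimates attached to these boundary conditions, the family $\{\mu_k^{-1/2}e_k\}$ is an orthonormal basis of $(\tilde{H}^m,B)$, whence $P_N u\to u$ in $H^m$ for every $u\in\tilde{H}^m$. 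Choosing $m$ large enough that $H^m\hookrightarrow W^{1,p}$ and $H^m\hookrightarrow L^\infty$, and using that $\cU\subset\tilde{H}^m$ is dense both in $U_p(\Omega)$ (by \eqref{Up}) and in $W$, one concludes that $\mathrm{span}\{e_k\}$ is dense in $U_p(\Omega)$ and in $W$.

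The main obstacle is the uniform boundedness of $P_N$ in the $L^p$ and $W^{1,p}$ norms, i.e. $\sup_N\|P_N\|_{U_p\to U_p}<\infty$ and $\sup_N\|P_N\|_{W\to W}<\infty$. By construction $P_N$ is \emph{simultaneously} the $L^2$-orthogonal and the $B$-energy-orthogonal projection onto $\mathrm{span}\{e_1,\dots,e_N\}$, so it is uniformly bounded on $L^2$ and, through $B\simeq\|\cdot\|_{H^m}^2$, on $H^m$; the difficulty is that $W^{1,p}$ is an $L^p$-based space and cannot be reached from $L^2$ and $H^m$ by interpolation. The plan is to exploit the largeness of $m$: for $m$ large the embeddings $H^m\hookrightarrow W^{1,p}$ and $W^{1,p}\hookrightarrow H^1$ place the target norm between the two Hilbert scales, and one then transfers the $L^2$-based projection bounds to $L^p$-based ones using the $L^p$ elliptic regularity (Agmon--Douglis--Nirenberg / Calder\'on--Zygmund estimates) available for the higher-order projected operator $\cA=(-1)^m\cP\Delta^m$ with the boundary conditions in \eqref{ellp}. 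This transfer, namely controlling $\|P_N u\|_{W^{1,p}}$ by $\|u\|_{W^{1,p}}$ uniformly in $N$, is the delicate technical heart of the argument and the step I expect to require the most care. Once it is in place, the standard criterion (uniform partial-sum bounds plus density, with the $L^2$ coefficient functionals) yields that $\{e_k\}$ is a Schauder basis of both $U_p(\Omega)$ and $W$ for $m$ taken large enough to accommodate all the embeddings.
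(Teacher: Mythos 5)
Your spectral preliminaries (compactness and self-adjointness of $\cS$, positivity via Lax--Milgram, elliptic bootstrap for the eigenfunctions, membership in $\tilde{H}^m\subset W\subset U_p(\Omega)$) and your density argument (orthonormality of $\{\mu_k^{-1/2}e_k\}$ in $(\tilde{H}^m,B)$, convergence of partial sums in $H^m$, Sobolev embedding for $m$ large, density of $\cU$) all match the paper. You also correctly reduce the Schauder basis property to density plus a uniform bound on the partial-sum projections. But at exactly that point the proposal stops being a proof: the uniform bound $\sup_N\|P_N\|_{\cX\to\cX}<\infty$ for $\cX=U_p(\Omega)$ or $\cX=W$ is the entire content of the proposition, and you offer only a plan (``transfer the $L^2$-based bounds via Agmon--Douglis--Nirenberg / Calder\'on--Zygmund estimates''), explicitly deferring it as the step ``expected to require the most care.'' This is a genuine gap, and not a benign one: there is no standard $L^p$ elliptic-regularity route to uniform bounds for raw spectral partial sums. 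Indeed, for eigenfunction expansions of elliptic operators in dimension $d\ge 2$, uniform $L^p$ boundedness of the sharp (order-zero) partial-sum projections is known to fail for $p\neq 2$ in the model settings (the ball-multiplier phenomenon), so a direct hard-analysis transfer of the kind you sketch cannot be expected to go through; as you yourself observe, interpolation between the $L^2$ and $H^m$ bounds is also unavailable.

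The paper avoids hard estimates entirely and gets the uniform bound by soft functional analysis, in the opposite logical order from yours. First, each block projection $P_{m,m'}u=\sum_{k=m}^{m'}c_k\phi_k$ is \emph{individually} bounded on $\cX$, with constant depending on $(m,m')$, by equivalence of norms on its finite-dimensional range together with the continuous embedding $\cX\hookrightarrow U_2(\Omega)$: $\|P_{m,m'}u\|\le C(m,m')\bigl(\sum_{k=m}^{m'}c_k^2\bigr)^{1/2}\le C(m,m')\|u\|_{L^2}\le \tilde{C}\|u\|$. Second, for $u$ in the dense subspace $\cS(U_2(\Omega))\subset\tilde{H}^m$ the expansion already converges in $\tilde{H}^m$, hence in $\cX$ once $m$ is large enough for the embedding $\tilde{H}^m\hookrightarrow\cX$; therefore the orbit $\{P_{m,m'}u\}$ is bounded for each such $u$, and the Uniform Boundedness Principle yields $\sup_{1\le m\le m'}\|P_{m,m'}\|<\infty$. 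Finally, a Cauchy/density argument (approximating an arbitrary $u_*\in\cX$ by $u_n\in\cS(U_2(\Omega))$, extracting limit coefficients $\bar{c}_k$, and showing the tails $\|\sum_{k=m}^{m'}\bar{c}_k\phi_k\|$ are small uniformly) transfers the convergent expansion to all of $\cX$. In short: the paper \emph{derives} the uniform projection bound from convergence on a dense set where a stronger Hilbertian topology is available, rather than proving it by $L^p$ estimates; your proposal needs this mechanism (or some substitute) to close, and without it the proof is incomplete at its central step.
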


\begin{proof}
The operator $\cS$ is self-adjoint and compact as a map from $U_2(\Omega)$ to $U_2(\Omega)$. Then, it has a complete set of eigenfunctions in $U_2(\Omega)$.  
Denote the set of eigenfunctions as $\{\phi_k\}_{k=1}^{\infty}$ and the corresponding eigenvalues as $\{\lambda_k\}_{k=1}^{\infty}$. They are orthogonal in $L^2$, and thus they form a Schauder basis for $U_2(\Omega)$.

First, we show that the eigenfunctions form a Schauder basis for $\tilde{H}^m$. This is done by the same argument as in the proof of \cite[Section 6.5, Theorem 2]{evans2022partial}. In fact, by the elliptic regularity, one can show that $\phi_k \in H^{2m}$. Hence,  $\phi_k$ are also the eigenfunctions of $(-1)^m\cP\Delta^m$.
For any $u\in \tilde{H}^m$, by definition of $B$ in \eqref{defB} and integration by parts, one has
\[
B[u, \phi_k]=\int_{\Omega}u (-1)^m\Delta^m\phi_k
=\int_{\Omega}u (-1)^m\cP\Delta^m\phi_k=\lambda_k^{-1}\int_{\Omega}u\phi_k\,dx.
\]
Since $\{\phi_k\}$ form a basis for $U_2(\Omega)$, then $B[u,\phi_k]=0$ for all $k\ge 1$ implies that $u=0$ in $U_2(\Omega)$ and thus in $\tilde{H}^m$. Hence, $\{\phi_k\}$ is also complete in $\tilde{H}^m$. Moreover, it is orthogonal as well in $\tilde{H}^m$; hence, it is a Schauder basis for $\tilde{H}^m$.
The convergence in $\tilde{H}^m$ clearly implies the convergence in $U_2(\Omega)$. Hence, the expansion coefficient is the same in the two spaces.

Second, choose $m$ sufficiently large such that $\tilde{H}^m \subset W \subset U_2(\Omega)$, and the embeddings are continuous due to the Sobolev inequalities. 

It is clear that $\cU \subset \cS (U_2(\Omega)) \subset \tilde{H}^m \subset W$ where all the embeddings are continuous. Hence, $\cS(U_2(\Omega))$ is dense both in $U_p(\Omega)$ and $W$. Let $\cX$ be $U_p(\Omega)$ or $W$, and $\|\cdot\|$ is the corresponding norm. 

For every $u\in \cS(U_2(\Omega))\subset{W}$, one has the expansion in $\tilde{H}^m$ and thus
\[
u=\sum_{k=1}^{\infty} c_k \phi_k,\quad \text{in}\quad \cX.
\]
Consider the projection operator on $\cS(U_2(\Omega))$
\[
P_{m,m'}u:=\sum_{k=m}^{m'} c_k \phi_k.
\]
One has
\[
\|P_{m,m'}u\| \le C(m,m')\sqrt{\sum_{k=m}^{m'}c_k^2}
\le C(m,m')\|u\|_{L^2}\le \tilde{C}(m,m',\Omega,p)\|u\|.
\]
The first inequality is by the equivlance of norms for finite dimensional space, while others are trivial. Hence, $P_{m,m'}$ can be extended to the whole $\cX$. Since
\[
\|P_{m,m'}u\| 
\le \left\|\sum_{k=1}^{m'}c_k\phi_k\right\| +\left\|\sum_{k=1}^{m}c_k\phi_k\right\| 
\,,\]
and $\sum_{k=1}^mc_k\phi_k$ converges to $u$ in $\cX$, then the trajectory $O_u:=\{ P_{m,m'}u :  1\le m\le m'<\infty \}$ is bounded. By the Uniform Boundedness Principle,
\[
\sup_{1\le m\le m'<\infty}\|P_{m,m'}\| <\infty.
\]
Now, for any $u_*\in \cX$, we take a sequence $u_n \in \cS(U_2(\Omega))$ such that  $u_n\to u_*$ in $\cX$, which can be expressed by 
\[
u_n=\sum_{k=1}^{\infty}c_{nk}\phi_k \text{~in $\cX$}\,.
\]
 Then, for any $\epsilon>0$, there exists $n_0>0$ such that whenever $n_2>n_1\ge n_0$,
\[
\sup_{m,m'}\|P_{m,m'}(u_{n_1}-u_{n_2})\|\le C\|u_{n_1}-u_{n_2}\|< \epsilon.
\]
This implies that $c_{nk}\to \bar{c}_k$. Moreover, $\|P_{m,m'}u_{n_2}\|\le \|P_{m,m'}u_{n_1}\|+\epsilon$. Fixing $n_1$, taking $m$ large enough and taking $n_2\to\infty$, one then has $\|\sum_{k=m}^{m'}\bar{c}_k\phi_k\|<2\epsilon$. Then $\sum_{k=1}^m \bar{c}_k \phi_k$ is a Cauchy sequence in $\cX$. Hence, 
\[
\bar{u}=\sum_{k=1}^{\infty}\bar{c}_k\phi_k \in \cX.
\]
It is easy to identify $\bar{u}$ with $u_*$. This means that $\{\phi_k\}$ is a Schauder basis for $\cX$ as well and the expansion coefficient should be the same as in $U_2(\Omega)$ since the embedding from $\cX$ to $U_2(\Omega)$ is continuous.
\end{proof}

\section{The Galerkin approximation and precompactness}\label{4}

In this section, we apply the Galerkin approximations to \eqref{pNSe} and perform the energy estimates. Then, we obtain the precompactness of the solutions to the Galerkin systems.

\subsection{Galerkin approximation}
To introduce the Galerkin's approximation, for any $v_0\in U_p(\Omega)$, we write it as 
\[
v_0=\sum_{n\ge 0} c_{0,n}\phi_n\quad \text{in}\quad U_p(\Omega).
\]
Here $\{\phi_n\}$ is the Schauder basis we constructed in the last section. Since the case for $v_0=0$ is trivial, we consider the case $v_0\neq 0$. Hence there is a minimum $n_*$ such that $c_{0,,n_*}\neq 0$.
For all $N\ge n_*$, let 
\begin{equation}
W_N=\mathrm{span}\{\phi_1, \ldots, \phi_N \}.
\end{equation}
We hope to obtain a function $v^N:$ $[0,T]\to W_N \subset W$ of the form
 \[
 v^N(t)=\sum_{n=1}^N c_n^N(t)\phi_n,
 \]
where the coefficients $c_n^N(t)\in \R$ $(0\le t\le T, n=1,\cdots,N)$ satisfy
\begin{enumerate}[(i)]
\item The initial conditions hold for $0\le n\le N$:
\begin{gather}\label{galerkin1}
c_n^N(0)=c_{0,n}.
\end{gather}

\item For any $0\le t\le T$, $\varphi \in W_N$, 
\begin{gather}\label{galerkin2}
\frac{d}{dt}\int_{\Omega}\varphi	\cdot v_p^Ndx+\int_{\Omega}\varphi \cdot(v^N\cdot \nabla v_p^N)dx+\nu\int_{\Omega}\nabla\varphi:\mathcal{D}(v^N)|\mathcal{D}(v^N)|^{p-2}dx=0.
\end{gather}
Here similar to \eqref{pNSe}, 
\begin{align}
v_p^N=|v^N|^{p-2}v^N\,.
\end{align}
\end{enumerate}

Clearly, the equation \eqref{galerkin2} holds if for $i=1,...,N$, 
\begin{gather}\label{galerkin3}
\frac{d}{dt}\int_{\Omega}\phi_i	\cdot v_p^Ndx+\int_{\Omega}\phi_i\cdot(v^N\cdot \nabla v_p^N)dx+\nu\int_{\Omega}\nabla\phi_i:\mathcal{D}(v^N)|\mathcal{D}(v^N)|^{p-2}dx=0.
\end{gather}

The term
$\frac{d}{dt}\int_{\Omega}\phi_i	\cdot v_p^Ndx$  is equal to 
\begin{gather}
\sum_{j=1}^{N} \frac{d}{dt}c_j^N(t) \int_{\Omega}|v^N|^{p-2}\phi_i^T(I+(p-2)\hat{v}^N\otimes\hat{v}^N)\phi_j dx,
\end{gather}
where $\hat{v}^N$ is the unit vector with the same direction as $v^N$.

The term $ \int_{\Omega}\phi_i\cdot(v^N\cdot \nabla v_p^N)dx$ is equal to
\begin{equation}
\sum_{j,k=1}^N\int_{\Omega}|v^N|^{p-2}\phi_{i,k}v_j^N\partial_jv_k^Ndx+(p-2)\sum_{j,k,l=1}^N\int_{\Omega}|v^N|^{p-4}\phi_{i,k}v_j^N\partial_jv_l^Nv_l^Nv_k^Ndx.
\end{equation}
Denote 
\begin{gather}
A_{ij}^N(t) :=\int_{\Omega}|v^N|^{p-2}\phi_i^T(I+(p-2)\hat{v}^N\otimes\hat{v}^N)\phi_j dx,
\end{gather}
and
\begin{gather}
X^N(t):=\left(\begin{array}{c}c_1^N(t)\\ \vdots \\c_N^N(t)
\end{array}\right).
\end{gather}
Then the system~\eqref{galerkin2} is reduced to following equation,
\begin{gather}\label{eq:ode}
\left\{\begin{split}&A^N\dot{X}^N(t)=F(X^N(t)) ,\\
	&X^N(0)=X_0^N.\end{split}\right.
\end{gather}
Here $F(X^N(t))$ is a vector valued function in $\mathbb{R}^N$ with
\begin{multline}\label{e:Fform} 
(F(X^N))_i=-\nu\int_{\Omega}\nabla\phi_i:\mathcal{D}(v^N)|\mathcal{D}(v^N)|^{p-2}dx- \sum_{j,k=1}^N\int_{\Omega}|v^N|^{p-2}\phi_{i,k}v_j^N\partial_jv_k^Ndx\\
-(p-2)\sum_{j,k,l=1}^N\int_{\Omega}|v^N|^{p-4}\phi_{i,k}v_j^N\partial_jv_l^Nv_l^Nv_k^Ndx.
\end{multline}

\begin{lemma}\label{lmm:FAlocal}
As long as $X^N\neq 0$, $F(X^N)$ is locally Lipschitz in $X^N$ and $A^N$ is positive definite .	
\end{lemma}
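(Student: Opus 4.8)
The plan is to establish the two assertions separately, using throughout that each basis element $\phi_n$ is smooth on $\Omega$ (by the elliptic regularity noted in the proof of Proposition~\ref{pro:basis}) and that $v^N=\sum_{n=1}^N c_n^N\phi_n$ depends linearly on $X^N$. For the positive definiteness, I would fix $X^N\neq0$ and $\xi\in\R^N$, set $w=\sum_{i=1}^N\xi_i\phi_i\in W_N$, and compute
\[
\xi^\top A^N\xi=\int_\Omega|v^N|^{p-2}\bigl(|w|^2+(p-2)(\hat{v}^N\cdot w)^2\bigr)\,dx .
\]
Since $p\ge2$, the integrand is bounded below by $|v^N|^{p-2}|w|^2\ge0$, so $A^N\succeq0$; the real task is to upgrade this to strict positivity. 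If $\xi^\top A^N\xi=0$, then $|v^N|^{p-2}|w|^2=0$ almost everywhere, so $w=0$ a.e.\ on the set where $v^N\ne0$.

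The main obstacle is ruling out the degenerate possibility that $v^N$ vanishes on a positive-measure set carrying the support of $w$. Here I would invoke interior analyticity: each $\phi_k$ solves the constant-coefficient elliptic (generalized Stokes) system $(-1)^m\Delta^m\phi_k+\nabla p_k=\lambda_k^{-1}\phi_k$, $\div\phi_k=0$, obtained from $(-1)^m\mathcal P\Delta^m\phi_k=\lambda_k^{-1}\phi_k$ by splitting off the gradient part, and Morrey's analytic-hypoellipticity theorem forces $\phi_k$ to be real-analytic in $\Omega$. Hence any nonzero $v^N\in W_N$ is real-analytic and $\not\equiv0$, so $\{v^N=0\}$ is a null set and $|v^N|^{p-2}>0$ a.e. The displayed identity then forces $w\equiv0$, i.e.\ $\xi=0$ by linear independence of the $\phi_i$, so $A^N$ is positive definite whenever $X^N\neq0$. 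This also explains the hypothesis: at $X^N=0$ one has $v^N\equiv0$ and (for $p>2$) $A^N=0$, so the restriction $X^N\neq0$ cannot be dropped.

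For the local Lipschitz continuity of $F$, the key step is to remove the apparent singularities $|v^N|^{p-4}$ in \eqref{e:Fform}. Using $\div v^N=0$ and $\phi_i|_{\partial\Omega}=0$, I would integrate the convection term by parts to rewrite
\[
(F(X^N))_i=-\nu\int_\Omega\nabla\phi_i:\Psi(\mathcal D(v^N))\,dx+\int_\Omega\nabla\phi_i:\bigl(v^N\otimes\Phi(v^N)\bigr)\,dx,
\]
where $\Phi(z)=|z|^{p-2}z$ on $\R^d$ and $\Psi(M)=|M|^{p-2}M$ on $d\times d$ matrices. For $p\ge2$ these are $C^1$ and, on each ball of radius $R$, Lipschitz with constant $\le(p-1)R^{p-2}$, since $|D\Phi(z)|\le(p-1)|z|^{p-2}$ and similarly for $\Psi$. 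As $X^N\mapsto v^N$ is a bounded linear map into $C^1(\overline\Omega;\R^d)$, on any bounded neighborhood of a given $X^N$ the norms $\|v^N\|_\infty$ and $\|\mathcal D(v^N)\|_\infty$ stay below some $R$; estimating the difference of integrands by these Lipschitz bounds, together with the boundedness of $\nabla\phi_i$ and $v^N$, yields $|F(X^N)-F(\tilde X^N)|\le L\,|X^N-\tilde X^N|$.

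Thus $F$ is locally Lipschitz (in fact on all of $\R^N$, the hypothesis being needed only for $A^N$). Two places use $p\ge2$ crucially: the sign of the $(p-2)$-term and the positivity of $|v^N|^{p-2}$ in the first part, and the local Lipschitz bounds on $\Phi,\Psi$ in the second. The only genuinely delicate ingredient is the interior analyticity guaranteeing that a nonzero element of $W_N$ has a null zero set; the remaining estimates are routine bookkeeping.
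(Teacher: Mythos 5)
Your proof is correct, and while its skeleton matches the paper's (the same eigenvalue computation $\xi^\top A^N\xi=\int_\Omega|v^N|^{p-2}\bigl(|w|^2+(p-2)(\hat v^N\cdot w)^2\bigr)dx$ with lower bound $\min\{p-1,1\}\int_\Omega|v^N|^{p-2}|w|^2dx$), you go genuinely further at the two points where the paper is terse. For positive definiteness, the paper simply asserts that $\int_\Omega|v^N|^{p-2}|w|^2\,dx>0$ whenever $v^N\neq0$; this is not automatic, since a priori $w$ could vanish a.e.\ on $\{v^N\neq0\}$, and your unique-continuation step via interior analyticity of the eigenfunctions is exactly what is needed to close this. (A slightly more elementary variant of your step: taking divergence of $(-1)^m\Delta^m\phi_k=\lambda_k^{-1}\phi_k+\nabla q_k$ gives $\Delta q_k=0$, so $q_k$ is harmonic, hence analytic, and then each component $\phi_{k,j}$ solves a \emph{scalar} elliptic equation $(-1)^m\Delta^m\phi_{k,j}-\lambda_k^{-1}\phi_{k,j}=\partial_jq_k$ with analytic right-hand side, so Morrey--Nirenberg analytic hypoellipticity applies componentwise without invoking ADN ellipticity of the coupled system.) For the Lipschitz claim, the paper states only that $F$ is ``$C^1$ as long as $X^N\neq0$'' directly from \eqref{e:Fform}; your integration by parts, legitimate since $\div v^N=0$, $\phi_i|_{\partial\Omega}=0$ and $v_p^N\in C^1$ for $p\geq 2$, repackages the convection term as $\int_\Omega\nabla\phi_i:\bigl(v^N\otimes\Phi(v^N)\bigr)dx$ and, via the bound $|D\Phi(z)|\le(p-1)|z|^{p-2}$ (and likewise for $\Psi$), yields the quantitative estimate $|F(X^N)-F(\tilde X^N)|\le L|X^N-\tilde X^N|$ on bounded sets \emph{including} $X^N=0$, a strictly stronger conclusion than the paper's (in fact even the unreformulated integrand is harmless pointwise, since $|v^N|^{p-4}v_l^Nv_k^N$ has magnitude $|v^N|^{p-2}$). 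Your observation that the hypothesis $X^N\neq0$ is needed only for $A^N$ (which degenerates to $0$ at the origin when $p>2$) is also correct and clarifies the role of the assumption in the lemma.
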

\begin{proof}

From \eqref{e:Fform}, it is straightforward that $F(X^N)$ is $C^1$ in $X^N$ as long as $X^N \neq 0$.

Pick any vector $a\in \mathbb{R}^N$, $a\neq 0$, one have the following expression of $a^TA^Na$
\[
\int_{\Omega}|v^N|^{p-2}\sum_{i=1}^Na_i\phi_i^T(I+(p-2)\hat{v}^N\otimes\hat{v}^N)\sum_{j=1}^Na_j\phi_j \,dx.
\]
Denote $\alpha=\sum_{i=1}^Na_i\phi_i$, we thus have
\[
\int_{\Omega}|v^N|^{p-2}\alpha^T(I+(p-2)\hat{v}^N\otimes\hat{v}^N)\alpha\, dx.
\]
We notice that the engienvectors of $\hat{v}^N\otimes\hat{v}^N$ are vectors parallel to $v^N$ and vectors perpendicular to $v^N$, so the eigenvalues of $\hat{v}^N\otimes\hat{v}^N$ are 1 and 0. Consequently the eigenvalues of $(I+(p-2)\hat{v}^N\otimes\hat{v}^N)$ are $p-1$ and 1. Hence we have 
\[\int_{\Omega}|v^N|^{p-2}\alpha^T(I+(p-2)\hat{v}^N\otimes\hat{v}^N)\alpha \,dx\geq \min\{p-1,1\}\int_{\Omega}|v^N|^{p-2}|\alpha|^2\,dx>0,
\]
as long as $v^N\neq 0$, i.e. $X^N\neq 0$.
\end{proof}

\begin{proposition}\label{pro:localexis}
Given $|X|\neq 0$, there exists $\delta>0$ and a unique $X^N(t)\in C^1([0,\delta))$ such that $|X^N(t)|>0$ satisfying ~\eqref{eq:ode}, and $X^N(0)=X$.  
\end{proposition}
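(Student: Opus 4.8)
The plan is to reduce the implicit system \eqref{eq:ode} to an explicit autonomous ODE and then invoke the classical Cauchy--Lipschitz (Picard--Lindel\"of) theorem. Since the hypothesis is $|X|\neq 0$, I would work on the open set $\mathcal{O}:=\{Y\in\mathbb{R}^N:|Y|\neq 0\}=\mathbb{R}^N\setminus\{0\}$, which contains the initial datum $X$. By Lemma \ref{lmm:FAlocal}, for every $Y\in\mathcal{O}$ the matrix $A^N(Y)$ is positive definite and hence invertible, so on $\mathcal{O}$ the system is equivalent to
\[
\dot{X}^N(t)=G(X^N(t)):=A^N(X^N(t))^{-1}F(X^N(t)).
\]
It then suffices to show that $G$ is locally Lipschitz on $\mathcal{O}$: Picard--Lindel\"of yields a unique $C^1$ solution on a maximal interval $[0,\delta)$, and since $X^N(0)=X\in\mathcal{O}$ and $t\mapsto X^N(t)$ is continuous, shrinking $\delta$ if necessary guarantees $|X^N(t)|>0$ throughout $[0,\delta)$, so the solution never leaves $\mathcal{O}$ and the reduction is self-consistent.

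First I would verify that $Y\mapsto A^N(Y)$ is locally Lipschitz on $\mathcal{O}$. The entries $A_{ij}^N$ are built from the same type of integrals, $\int_\Omega |v^N|^{p-2}(\cdots)\,dx$ and $\int_\Omega|v^N|^{p-4}(\cdots)\,dx$ with $v^N=\sum_l c_l\phi_l$, that already appear in \eqref{e:Fform}; hence the same argument that yields the local Lipschitz regularity of $F$ in Lemma \ref{lmm:FAlocal} applies and shows that $A^N$ is continuous, indeed locally Lipschitz, on $\mathcal{O}$. Combined with positive-definiteness, the smallest eigenvalue of $A^N(Y)$ is bounded below by a positive constant locally uniformly on $\mathcal{O}$, so $A^N(Y)^{-1}$ is well-defined, and since matrix inversion is smooth on $GL(N,\mathbb{R})$, the map $Y\mapsto A^N(Y)^{-1}$ is locally Lipschitz on $\mathcal{O}$ as well.

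Next I would assemble $G$. Because $F$ is locally Lipschitz (hence locally bounded) and $A^N(\cdot)^{-1}$ is locally Lipschitz (hence locally bounded), the product $G=A^N(\cdot)^{-1}F$ is locally Lipschitz on $\mathcal{O}$ via the usual estimate
\[
\|G(Y_1)-G(Y_2)\|\le \|A^N(Y_1)^{-1}\|\,\|F(Y_1)-F(Y_2)\|+\|A^N(Y_1)^{-1}-A^N(Y_2)^{-1}\|\,\|F(Y_2)\|.
\]
Applying Picard--Lindel\"of at the point $X$ then delivers the desired $\delta>0$ and the unique $X^N\in C^1([0,\delta))$ with $X^N(0)=X$; continuity of $X^N$ preserves $|X^N(t)|>0$ for small $t$, which completes the argument.

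The main obstacle is the first step, the regularity of $A^N$ (and the fact that $F$'s regularity argument transfers to it): the weights $|v^N|^{p-2}$ and especially $|v^N|^{p-4}$ are not pointwise Lipschitz in $v^N$ near the zero set of $v^N$ when $p<4$, so one must exploit that $v^N$ is a finite combination of smooth basis functions (so its zero set is lower dimensional) together with the standing assumption $p\ge d\ge 2$ to see that the singular integrands remain integrable and depend Lipschitz-continuously on the coefficient vector $Y$. This is exactly the delicate point already absorbed into Lemma \ref{lmm:FAlocal}, and once that regularity is granted the remaining ODE argument is entirely standard.
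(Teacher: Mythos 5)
Your proof is correct and takes essentially the same route as the paper: both reduce \eqref{eq:ode} to the explicit system $\dot{X}^N=(A^N)^{-1}F(X^N)$ on $\mathbb{R}^N\setminus\{0\}$ using the positive definiteness of $A^N$ from Lemma \ref{lmm:FAlocal}, establish local Lipschitz regularity of the right-hand side, and invoke Picard--Lindel\"of, the only cosmetic difference being that the paper obtains regularity of $(A^N)^{-1}$ via Cramer's rule (determinant and cofactors being $C^1$ away from $X^N=0$) while you use smoothness of matrix inversion on $\mathrm{GL}(N,\mathbb{R})$ together with a local lower bound on the smallest eigenvalue. Your explicit remark that continuity lets one shrink $\delta$ to preserve $|X^N(t)|>0$, and your flagging of the $|v^N|^{p-4}$ weight (harmless here since it always appears multiplied by $v_l^N v_k^N$), are minor refinements of points the paper leaves implicit.
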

\begin{proof}
First by Lemma \ref{lmm:FAlocal}, we can rewrite ODE  ($\ref{eq:ode}$) as $\dot{X}^N(t)=(A^N)^{-1}F(X^N)$. By \textit{Cramer's rule}, \[
(A^N)^{-1}=\frac{1}{\det(A^N)}M^T,
\]
where $M$ is the matrix of \textit{cofactors} of $A^N$. Since $\det(A^N)$ and $M$ are both $C^1$ in $X^N$ as long as $X^N\neq 0$, we have $(A^N)^{-1}F(X^N)$ is $C^1$ as long as $X^N\neq 0$. This ensures $(A^N)^{-1}F(X^N)$ is locally Lipchitz  as long as $X^N\neq 0$. Hence following from classical ODE theory, we conclude that there exists $\delta>0$ such that ODE system (\ref{eq:ode}) has a unique solution on $[0,\delta)$.
\end{proof}

Note that by the argument in the proof of Proposition \ref{pro:localexis}, as long as $0<|X_N(t)|<\infty$, the solution can be extended. The largest existence time $t_*$ before $|X_N|$ touching $0$ is thus defined by
\begin{align}
t_*:=\sup\{t\ge 0: \eqref{eq:ode} \text{~has a unique solution~} X^N\in C^1[0, t], |X^N(s)|\neq 0\,,~\forall s\in [0, t)\}.
\end{align}
Clearly, at least one of the followings must happen if $t_*<\infty$:
\begin{itemize}
\item $\limsup_{t\to t_*}|X_N(t)|=+\infty$;
\item $\liminf_{t\to t_*}|X_N(t)|=0$.
\end{itemize}

Next we prove that $X^N(t)$ is never 0 and does not blow up . Once this has been proved, the solution to ODE ($\ref{eq:ode}$) is defined globally.

\begin{proposition}\label{vbound}
Suppose $v_0 \in U_p(\Omega)$. For $T<t_*$, one has
\begin{gather}\label{eq:energy}
\frac{d}{dt}\int_\Omega|v^N|^pdx=-q\nu\int_\Omega|\mathcal{D}(v^N)|^pdx
\end{gather}
and thus there exists a constant $C(p,\nu, v_0)$ independent of $N$ and $T$ such that
\begin{gather}
\begin{split}
&\|v^N\|_{L^\infty(0,T;L^p(\Omega))}\le \|v_0^N\|_{L^p(\Omega)},\\
&\|v^N\|_{L^p(0,T;W_0^{1,p}(\Omega))}\le C(p,\nu,v_0).
\end{split}
\end{gather}
Moreover, there are positive constants $C,C_N$ such that $\int_\Omega|v^N|^pdx\geq C e^{-C_N t}$ for any $t\le T$. Consequently, the solution $v^N$ exists globally (i.e., $t_{*} =\infty$).
\end{proposition}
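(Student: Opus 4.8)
The plan is to derive the energy identity \eqref{eq:energy} by testing the Galerkin system against the approximate solution itself, and then to read off every stated bound and the global existence from it. Since $v^N(t)\in W_N$ for each $t$, I take \eqref{galerkin3} for each $i$, multiply by $c_i^N(t)$, and sum over $i=1,\dots,N$; because $\sum_i c_i^N\phi_i=v^N$, this is the same as choosing $\varphi=v^N(t)$ in \eqref{galerkin2}, and it produces three terms that I treat separately.

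For the first term, moving $\tfrac{d}{dt}$ inside and using $\sum_i c_i^N\phi_i=v^N$ gives $\int_\Omega v^N\cdot\partial_t v_p^N\,dx$; differentiating $v_p^N=|v^N|^{p-2}v^N$ (legitimate pointwise for $p\ge 2$, the expression vanishing where $v^N=0$) yields $v^N\cdot\partial_t v_p^N=(p-1)|v^N|^{p-2}v^N\cdot\partial_t v^N=\tfrac1q\partial_t|v^N|^p$, so this term equals $\tfrac1q\tfrac{d}{dt}\int_\Omega|v^N|^p\,dx$. For the diffusion term, the symmetry of $\cD(v^N)|\cD(v^N)|^{p-2}$ lets me replace $\nabla v^N$ by $\cD(v^N)$ in the contraction, giving exactly $\nu\int_\Omega|\cD(v^N)|^p\,dx$. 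The convection term $\int_\Omega v^N\cdot(v^N\cdot\nabla v_p^N)\,dx$ is the one that must cancel: the pointwise identity $\sum_k v_k^N\partial_j(v_p^N)_k=\tfrac1q\partial_j|v^N|^p$ rewrites it as $\tfrac1q\int_\Omega v^N\cdot\nabla|v^N|^p\,dx$, and an integration by parts turns this into $-\tfrac1q\int_\Omega(\nabla\cdot v^N)|v^N|^p\,dx+\tfrac1q\int_{\partial\Omega}|v^N|^p\,v^N\cdot n\,dS$, both vanishing since $v^N\in W_N\subset W$ is divergence free with zero trace. Collecting the three contributions gives \eqref{eq:energy}.

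From \eqref{eq:energy} the bounds follow quickly. The right-hand side is nonpositive, so $t\mapsto\int_\Omega|v^N|^p\,dx$ is nonincreasing, giving the $L^\infty(0,T;L^p)$ bound $\|v^N(t)\|_{L^p}\le\|v_0^N\|_{L^p}$. Integrating \eqref{eq:energy} on $[0,T]$ gives $q\nu\int_0^T\|\cD(v^N)\|_{L^p}^p\,dt=\|v_0^N\|_{L^p}^p-\|v^N(T)\|_{L^p}^p\le\|v_0^N\|_{L^p}^p$, and since $v_0^N\to v_0$ in $U_p(\Omega)$ the numbers $\|v_0^N\|_{L^p}$ are bounded; combining this with Korn's inequality $\|\nabla w\|_{L^p}\le C\|\cD(w)\|_{L^p}$ on $W_0^{1,p}$, which follows from \eqref{eq:eqnorm} together with Poincar\'e, yields the $L^p(0,T;W_0^{1,p})$ bound with a constant $C(p,\nu,v_0)$ independent of both $N$ and $T$, since the right-hand side no longer sees $T$.

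It remains to get the lower bound and deduce global existence. On the finite-dimensional space $W_N$ all norms are equivalent, so $\|\cD(v^N)\|_{L^p}^p\le C_N\|v^N\|_{L^p}^p$ for a constant $C_N$ depending on $N$; inserting this into \eqref{eq:energy} gives $\tfrac{d}{dt}\int_\Omega|v^N|^p\,dx\ge -q\nu C_N\int_\Omega|v^N|^p\,dx$, and Gr\"onwall's inequality yields $\int_\Omega|v^N|^p\,dx\ge\|v_0^N\|_{L^p}^p e^{-q\nu C_N t}$, with $\|v_0^N\|_{L^p}>0$ for $N\ge n_*$. Recalling the blow-up dichotomy for \eqref{eq:ode} (if $t_*<\infty$ then $\limsup_{t\to t_*}|X^N|=\infty$ or $\liminf_{t\to t_*}|X^N|=0$) and that $|X^N(t)|$ is comparable to $\|v^N(t)\|_{L^p}$ on $W_N$, the monotonicity bound excludes the first alternative and the exponential lower bound, strictly positive on any finite $[0,t_*)$, excludes the second; hence $t_*=\infty$. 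I expect the two delicate points to be the cancellation of the convection term, whose integrations by parts must be justified via the regularity of the basis ($\phi_n\in H^{2m}$ with $m$ large, hence smooth enough) together with the divergence-free and zero-trace conditions, and the $T$-independence of the $W_0^{1,p}$ constant, which genuinely requires Korn's inequality rather than a naive combination of \eqref{eq:eqnorm} with the $L^p$ bound (the latter would leave a stray $T$-dependent term).
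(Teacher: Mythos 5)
Your proposal is correct and follows essentially the same route as the paper's proof: testing with $\varphi=v^N$, cancelling the convection term via the divergence-free and zero-trace conditions, integrating the resulting energy identity, and using equivalence of norms on the finite-dimensional space $W_N$ together with Gr\"onwall's inequality and the blow-up dichotomy to conclude $t_*=\infty$. If anything, you are slightly more careful than the paper at two points it glosses over --- justifying the chain rule $v^N\cdot\partial_t v_p^N=\tfrac1q\partial_t|v^N|^p$ when pulling the time derivative through, and noting that the $T$-independent $W_0^{1,p}$ bound needs the trace-zero Korn inequality $\|\nabla w\|_{L^p}\le C\|\mathcal{D}(w)\|_{L^p}$ rather than \eqref{eq:eqnorm} alone --- so no gaps remain.
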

\begin{proof}
First take $\varphi = v^N$ in \eqref{galerkin2} (equivalently, multiply $X^N(t)^T$ on both sides of $\eqref{eq:ode}$). As $v^N$ is divergence free and disappears on the boundary, one has $\langle v^N, v^N\cdot\nabla v_p^N\rangle=0$ . 
Moreover,
\[
\nabla v^N:\cD(v^N)=\cD(v^N):\cD(v^N).
\]
Hence, we have
\[
\frac{d}{dt}\int_\Omega|v^N|^pdx=-q\nu\int_\Omega|\mathcal{D}(v^N)|^pdx,
\]
where $q$ satisfies $\frac{1}{p}+\frac{1}{q}=1$.
As a result, we have $\|v^N(t)\|_{L^p(\Omega)}\le \|v_0\|_{L^p(\Omega)}$ for any $0\le t\le T$, or $\|v^N\|_{L^\infty(0,T;L^p(\Omega))}\le \|v_0\|_{L^p(\Omega)}$.
Integrating equation (\ref{eq:energy}) over time interval $[0,T]$, one have
\begin{gather*}
\|v^N(T)\|_{L^p(\Omega)}^p-\|v^N(0)\|_{L^p(\Omega)}^p=-q\nu\|\mathcal{D}(v^N)\|_{L^p(0,T;L^p(\Omega))}.
\end{gather*}
This implies
\begin{gather*}
\begin{split}
\|v^N\|_{L^p(0,T;W_0^{1,p}(\Omega))} &\leq C \|\mathcal{D}(v^N)\|_{L^p(0,T;L^p(\Omega))} \\
& =\frac{C}{q\nu}\big(\|v^N(0)\|_{L^p(\Omega)}^p-\|v^N(T)\|_{L^p(\Omega)}^p\big)\le \frac{C}{q\nu}\|v_0\|_{L^p(\Omega)}^p.
\end{split}
\end{gather*}

Next, we show that $t_*=\infty$. In fact, define
\[
\|X^N\|_1:=\left(\int_\Omega|v^N|^pdx\right)^{1/p}
\]
and
\[
\|X^N\|_2:=\left(\int_\Omega|\mathcal{D}(v^N)|^pdx\right)^{1/p}.
\]
It is easy to see by Minkowski inequality that both $\|\cdot\|_1$ and $\|\cdot\|_2$
are norms for $X^N$. Since $X^N$ is in a finite dimensional Euclidean space, one thus can find a constant $c_N^1>0, c_N^2>0$ such that
\[
c_N^1\|X^N\|_2\le \|X^N\|_1 \le c_N^2\|X^N\|_2,
\quad\forall X^N\in \R^{N}.
\]
Hence,
\[
\frac{d}{dt}\int_\Omega|v^N|^pdx\ge -\frac{q\nu}{(c_N^1)^p} \int_\Omega|v^N|^pdx.
\]
Hence, $v^N$ is never zero.
Moreover, since $\|v^N\|_{L^\infty(0,T;L^p(\Omega))}\le \|v_0^N\|_{L^p(\Omega)}$, $X^N$ never blows up so that one can in fact take $t_*=\infty$.
\end{proof}

By the fact that $v_p^N=|v^N|^{p-2}v^N$, it is easy to obtain the following Corollary.
\begin{corollary}\label{vpbound}
It holds that 
\[
v_p^N\in L^{\infty}(0,T;L^q(\Omega;\mathbb{R}^d))\cap L^q(0,T;W_0^{1,q}(\Omega;\mathbb{R}^d)).
\]
Moreover, 
\[
\sup_N \|v_p^N\|_{L^{\infty}(0, T; L^q)}
+\|v_p^N\|_{L^q(0, T; W^{1,q})}<\infty.
\]
\end{corollary}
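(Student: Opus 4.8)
The plan is to reduce every norm of $v_p^N$ to the uniform-in-$N$ bounds on $v^N$ already established in Proposition \ref{vbound}, exploiting the pointwise algebraic relations between $v_p^N=|v^N|^{p-2}v^N$ and $v^N$ together with the conjugacy $q=p/(p-1)$. Throughout I would treat the genuinely nonlinear case $p>2$; the case $p=2$ is the classical one, where $v_p^N=v^N$ and the statement is immediate.

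First I would handle the $L^\infty(0,T;L^q)$ bound. Pointwise one has $|v_p^N|=|v^N|^{p-1}$, so that, using $(p-1)q=p$,
\[
\|v_p^N(t)\|_{L^q}^q=\int_\Omega |v^N|^{(p-1)q}\,dx=\int_\Omega|v^N|^p\,dx=\|v^N(t)\|_{L^p}^p.
\]
Taking the supremum in $t$ and invoking $\|v^N\|_{L^\infty(0,T;L^p)}\le\|v_0\|_{L^p}$ from Proposition \ref{vbound} gives $\|v_p^N\|_{L^\infty(0,T;L^q)}\le\|v_0\|_{L^p}^{p-1}$, uniformly in $N$. Since $T<\infty$, this also yields a uniform $L^q(0,T;L^q)$ bound on $v_p^N$ itself.

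Next I would bound $\nabla v_p^N$. Using the identity $\nabla v_p^N=|v^N|^{p-2}\nabla v^N\cdot(I+(p-2)\hat{v}^N\otimes\hat{v}^N)$ recorded before Definition \ref{def:weak}, together with the fact (already used in Lemma \ref{lmm:FAlocal}) that $I+(p-2)\hat{v}^N\otimes\hat{v}^N$ has eigenvalues $1$ and $p-1$, I obtain the pointwise estimate $|\nabla v_p^N|\le C_p\,|v^N|^{p-2}|\nabla v^N|$ with $C_p=\max\{1,p-1\}$. Raising to the power $q$ and applying H\"older's inequality in space with the conjugate exponents $a=\tfrac{p-1}{p-2}$ and $b=p-1$ — chosen so that $(p-2)q\,a=p$ and $q\,b=p$ — gives
\[
\int_\Omega|\nabla v_p^N|^q\,dx\le C_p^q\Big(\int_\Omega|v^N|^p\,dx\Big)^{\frac{p-2}{p-1}}\Big(\int_\Omega|\nabla v^N|^p\,dx\Big)^{\frac{1}{p-1}}\le C_p^q\,\|v_0\|_{L^p}^{\frac{p(p-2)}{p-1}}\,\|\nabla v^N\|_{L^p}^{q},
\]
where in the last step I again used the uniform $L^\infty(0,T;L^p)$ bound on $v^N$ and the identity $p/(p-1)=q$.

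Finally I would integrate this estimate in time. Because $q\le p$ (equivalently $p\ge2$) and $T<\infty$, H\"older in time gives $\int_0^T\|\nabla v^N\|_{L^p}^q\,dt\le T^{1-q/p}\|\nabla v^N\|_{L^p(0,T;L^p)}^q$, whose right-hand side is bounded uniformly in $N$ by Proposition \ref{vbound}. Combining, $\|\nabla v_p^N\|_{L^q(0,T;L^q)}$ is uniformly bounded, and with the $L^q(0,T;L^q)$ bound on $v_p^N$ this yields the asserted uniform bound in $L^q(0,T;W^{1,q})$. That $v_p^N$ actually lies in $W_0^{1,q}$ (zero trace) follows because each $v^N(\cdot,t)$ is a finite combination of the smooth eigenfunctions $\phi_n$, hence continuous up to $\partial\Omega$ and vanishing there, so the continuous map $s\mapsto|s|^{p-2}s$ forces $v_p^N$ to vanish on $\partial\Omega$ as well. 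I do not expect a genuine obstacle in this corollary: every constant is inherited, $N$-independent, from Proposition \ref{vbound}, and the only care needed is the exponent bookkeeping — in particular checking that the spatial H\"older exponents $a,b$ are admissible (which uses $p>2$, the case $p=2$ being trivial) and that the temporal integration is legitimate because $q\le p$ and $T$ is finite.
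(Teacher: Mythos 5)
Your proposal is correct and follows exactly the route the paper intends: the paper offers no detailed proof (it simply remarks that the corollary follows from $v_p^N=|v^N|^{p-2}v^N$), and the intended computation is precisely your pointwise identities $|v_p^N|=|v^N|^{p-1}$ and $\nabla v_p^N=|v^N|^{p-2}\nabla v^N\cdot(I+(p-2)\hat{v}^N\otimes\hat{v}^N)$ (already recorded before Definition~\ref{def:weak}) combined with H\"older in space and time against the uniform bounds of Proposition~\ref{vbound}. Your exponent bookkeeping, the separate treatment of $p=2$, and the zero-trace observation via the smooth eigenfunctions are all sound, so the write-up is a faithful, fully detailed version of the paper's one-line argument.
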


\subsection{Compactness}

In this section, we prove the precompactness of the sequences generated by the Galerkin approximation~\eqref{galerkin1} and~\eqref{galerkin3}.

Later, we will need the time regularity of the sequences. To this end, we introduce a time-shift operator:
\begin{equation}
\tau_h v^N(x,t)=v^N(x,t+h).
\end{equation}

First we state a lemma which is useful in proving the convergence of time-shift operator. For a detailed proof of the lemma, one can read \cite{dl1998}  or \cite[Lemma 2]{liliu17}.
\begin{lemma}\label{gady}
Let $p>1$, then there exists $C(p)>0$ such that for any $\eta_1, \eta_2 \in \mathbb{R}^d$, it holds that
\[
(|\eta_1|^{p-2}\eta_1-|\eta_2|^{p-2}\eta_2)\cdot(\eta_1-\eta_2)\geq C(p)(|\eta_1|+|\eta_2|)^{p-2}
|\eta_1-\eta_2|^2.\]
\end{lemma}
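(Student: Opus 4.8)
The plan is to represent the left-hand side as an integral of a positive-definite quadratic form along the segment joining $\eta_2$ to $\eta_1$, and then bound that integral from below separately in the two regimes $1<p\le 2$ and $p\ge 2$. Set $f(\eta)=|\eta|^{p-2}\eta$ and $\xi(t)=(1-t)\eta_2+t\eta_1$ for $t\in[0,1]$. If either argument is $0$ the inequality is immediate, so assume both are nonzero. Away from the origin $f$ is $C^1$ with Jacobian
\[
Df(\eta)=|\eta|^{p-2}\bigl(I+(p-2)\hat{\eta}\otimes\hat{\eta}\bigr),\qquad \hat{\eta}=\eta/|\eta|,
\]
which is exactly the matrix analysed in Lemma \ref{lmm:FAlocal}: its eigenvalues are $p-1$ and $1$, so for any $w\in\R^d$ one has $w^{T}Df(\eta)w\ge \min\{p-1,1\}\,|\eta|^{p-2}|w|^2$. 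By the fundamental theorem of calculus applied to $t\mapsto f(\xi(t))$ one gets
\[
f(\eta_1)-f(\eta_2)=\int_0^1 Df(\xi(t))\,(\eta_1-\eta_2)\,dt,
\]
and therefore, writing $w=\eta_1-\eta_2$,
\[
\bigl(f(\eta_1)-f(\eta_2)\bigr)\cdot(\eta_1-\eta_2)\ \ge\ \min\{p-1,1\}\,|\eta_1-\eta_2|^2\int_0^1|\xi(t)|^{p-2}\,dt.
\]

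It then remains to show $\int_0^1|\xi(t)|^{p-2}\,dt\ge c(p)\,(|\eta_1|+|\eta_2|)^{p-2}$. First I would record the elementary bound $|\xi(t)|\le (1-t)|\eta_2|+t|\eta_1|\le |\eta_1|+|\eta_2|=:M$. When $1<p\le 2$ the exponent $p-2$ is nonpositive, so $s\mapsto s^{p-2}$ is nonincreasing and $|\xi(t)|^{p-2}\ge M^{p-2}$ pointwise, giving $\int_0^1|\xi(t)|^{p-2}\,dt\ge M^{p-2}$ at once. When $p\ge 2$ the exponent is nonnegative and the segment may pass near the origin, so instead I would integrate over a subinterval where $|\xi|$ is controlled from below: assuming without loss of generality $|\eta_1|\ge|\eta_2|$ (so $|\eta_1|\ge M/2$), the triangle inequality gives $|\xi(t)|\ge t|\eta_1|-(1-t)|\eta_2|\ge (2t-1)|\eta_1|$, hence $|\xi(t)|\ge \tfrac12|\eta_1|\ge M/4$ for $t\in[\tfrac34,1]$; restricting the integral to $[\tfrac34,1]$ yields $\int_0^1|\xi(t)|^{p-2}\,dt\ge \tfrac14 (M/4)^{p-2}=4^{-(p-1)}M^{p-2}$. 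Combining the two regimes gives the claim with $C(p)=\min\{p-1,1\}\cdot\min\{1,4^{-(p-1)}\}$, and any explicit positive constant depending only on $p$ would suffice.

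The one point demanding care is the validity of the integral representation when the segment $\xi(\cdot)$ crosses the origin in the fast-diffusion range $1<p<2$, where $Df$ is unbounded. Here I would justify the fundamental theorem of calculus by noting that $t\mapsto f(\xi(t))$ is continuous on $[0,1]$, is $C^1$ off the single crossing time, and has derivative $Df(\xi(t))w$ whose norm is $O\bigl(|\xi(t)|^{p-2}\bigr)$ and hence integrable because $p-2>-1$; absolute continuity then legitimizes the representation. Everything else reduces to the eigenvalue computation (shared with Lemma \ref{lmm:FAlocal}) and two one-variable estimates, so this integrability check is the only genuine obstacle.
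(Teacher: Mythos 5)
Your proof is correct, but there is nothing internal to compare it against: the paper does not prove Lemma \ref{gady}, instead deferring entirely to \cite{dl1998} and \cite[Lemma 2]{liliu17}. Your argument is the standard monotonicity proof from that literature, and every step checks out. The Jacobian formula $Df(\eta)=|\eta|^{p-2}(I+(p-2)\hat{\eta}\otimes\hat{\eta})$ and the resulting bound $w^{T}Df(\eta)w\ge\min\{p-1,1\}\,|\eta|^{p-2}|w|^2$ are exactly the eigenvalue computation the paper carries out in Lemma \ref{lmm:FAlocal} for a different purpose, so your proof has the pleasant feature of reusing an ingredient already present in the paper. The two-regime lower bound on the segment integral is also sound: for $1<p\le 2$ the pointwise bound $|\xi(t)|^{p-2}\ge M^{p-2}$ with $M=|\eta_1|+|\eta_2|$ is immediate from monotonicity (and holds vacuously at a crossing of the origin), while for $p\ge 2$ your restriction to $t\in[\tfrac34,1]$, where $|\xi(t)|\ge(2t-1)|\eta_1|\ge M/4$ under the symmetric normalization $|\eta_1|\ge|\eta_2|$, gives the stated $4^{-(p-1)}M^{p-2}$; the degenerate cases $\eta_i=0$ and $\eta_1=\eta_2$ are handled separately and trivially. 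Most importantly, you correctly identified and resolved the one point where a naive application of the fundamental theorem of calculus would be illegitimate: in the singular range $1<p<2$ the segment may cross the origin, where $Df$ blows up, but since $|\xi(t)|$ vanishes only linearly at the single crossing time, $|Df(\xi(t))|=O(|\xi(t)|^{p-2})$ is integrable precisely because $p-2>-1$, and the continuity of $f\circ\xi$ together with this integrability yields absolute continuity and hence the integral representation. This yields a fully self-contained proof with an explicit constant $C(p)=\min\{p-1,1\}\cdot\min\{1,4^{-(p-1)}\}$, which is arguably an improvement on the paper's citation-only treatment.
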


In the following lemma, we would study the asymptotic behavior of sequence $\tau_hv^N(x,t)$ as $h$ goes to 0.
\begin{lemma} \label{shiftest}
Let $p\geq d\geq 2$ and $\Omega$ be a bounded domain, then it holds that
	$\|\tau_hv^N-v^N\|_{L^p(0,T-h;L^p(\Omega;\mathbb{R}^d))}\to 0$ uniformly in $N$ as 	$h\to 0+$.
\end{lemma}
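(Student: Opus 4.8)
The plan is to bound the space--time increment of $v^N$ by a duality pairing and then to read off a rate in $h$ from the Galerkin identity \eqref{galerkin2}. Since $p\ge 2$, applying Lemma \ref{gady} pointwise with $\eta_1=v^N(x,t+h)$, $\eta_2=v^N(x,t)$ and using $|\eta_1-\eta_2|^{p-2}\le(|\eta_1|+|\eta_2|)^{p-2}$ gives the pointwise estimate
\[
|v^N(t+h)-v^N(t)|^p\le \frac{1}{C(p)}\bigl(v_p^N(t+h)-v_p^N(t)\bigr)\cdot\bigl(v^N(t+h)-v^N(t)\bigr).
\]
Integrating in $x$ and $t$, it suffices to show that
\[
I_h:=\int_0^{T-h}\bigl\langle v_p^N(t+h)-v_p^N(t),\,\psi_t\bigr\rangle\,dt,\qquad \psi_t:=v^N(t+h)-v^N(t),
\]
tends to $0$ uniformly in $N$ as $h\to0^+$, where $\langle\cdot,\cdot\rangle$ denotes the $L^2$ pairing.

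For fixed $t,h$ the increment $\psi_t$ lies in $W_N$ and is therefore an admissible test function, while $s\mapsto v^N(s)$ is $C^1$ by Proposition \ref{pro:localexis}. Hence
\[
\bigl\langle v_p^N(t+h)-v_p^N(t),\,\psi_t\bigr\rangle=\int_t^{t+h}\frac{d}{ds}\bigl\langle v_p^N(s),\,\psi_t\bigr\rangle\,ds,
\]
and \eqref{galerkin2} identifies the integrand with $-\int_\Omega\psi_t\cdot(v^N(s)\cdot\nabla v_p^N(s))\,dx-\nu\int_\Omega\nabla\psi_t:\mathcal{D}(v^N(s))|\mathcal{D}(v^N(s))|^{p-2}\,dx$. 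Taking absolute values splits $I_h$ into a diffusion piece and a convection piece, each of the form $\int_0^{T-h}(g(t+h)+g(t))\bigl(\int_t^{t+h}(\cdots)\,ds\bigr)\,dt$, where $g(s):=\|v^N(s)\|_{W^{1,p}}$ satisfies $\int_0^T g^p\,ds\le C$ uniformly in $N$ by Proposition \ref{vbound}.

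The diffusion piece is easy: Hölder's inequality together with $|\mathcal{D}(w)|\le|\nabla w|$ gives
\[
\Bigl|\int_\Omega\nabla\psi_t:\mathcal{D}(v^N(s))\,|\mathcal{D}(v^N(s))|^{p-2}\,dx\Bigr|\le \|\mathcal{D}(\psi_t)\|_{L^p}\,\|\mathcal{D}(v^N(s))\|_{L^p}^{p-1}\lesssim (g(t+h)+g(t))\,g(s)^{p-1},
\]
and a double use of Hölder in $t$ together with Fubini (each $g(s)^p$ being integrated over a $t$-set of measure $\le h$) yields a bound of order $h$. The convection piece is delicate. Using that $v^N$ is divergence free and $\psi_t$ vanishes on $\partial\Omega$, I integrate by parts to obtain $\int_\Omega\psi_t\cdot(v^N(s)\cdot\nabla v_p^N(s))\,dx=-\int_\Omega(v^N(s)\cdot\nabla\psi_t)\cdot v_p^N(s)\,dx$, whose modulus is at most $\int_\Omega|v^N(s)|^p|\nabla\psi_t|\,dx\le \|v^N(s)\|_{L^{pq}}^{p}\,\|\nabla\psi_t\|_{L^p}$ by Hölder, $q$ being the conjugate exponent of $p$. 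Here the assumption $p\ge d$ is essential: it furnishes the Sobolev embedding $W^{1,p}(\Omega)\hookrightarrow L^{pq}(\Omega)$ (note $pq=p^2/(p-1)<\infty$), so that $\|v^N(s)\|_{L^{pq}}\lesssim g(s)$ and the integrand becomes $g(s)^p(g(t+h)+g(t))$; the same Hölder--Fubini bookkeeping then produces a bound of order $h^{1/q}$.

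Combining the two pieces gives $I_h\le C\,(h+h^{1/q})$ with $C$ depending only on $p,\nu,\Omega$ and $\|v_0\|_{L^p}$, hence $\|\tau_h v^N-v^N\|_{L^p(0,T-h;L^p)}\to0$ uniformly in $N$. The main obstacle is the convection term: the only uniform a priori bounds place $v^N$ in $L^\infty(0,T;L^p)\cap L^p(0,T;W^{1,p})$, and keeping a full factor $\|\nabla\psi_t\|_{L^p}$ in reserve after the integration by parts forces $|v^N|^p\in L^q$, i.e.\ $v^N\in L^{pq}$, which is available only through the critical Sobolev embedding --- precisely where $p\ge d$ enters. Everything else is routine Hölder--Fubini bookkeeping turning the short $[t,t+h]$ integrals into positive powers of $h$.
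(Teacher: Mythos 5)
Your proof is correct, and its skeleton is the paper's: lower-bound $\|\tau_h v^N-v^N\|_{L^p}^p$ pointwise via Lemma \ref{gady} by the pairing $\langle \tau_h v_p^N-v_p^N,\,\tau_h v^N-v^N\rangle$, integrate the Galerkin identity \eqref{galerkin2} over $[t,t+h]$ against the increment $\psi_t\in W_N$, and convert the $[t,t+h]$ integrals into positive powers of $h$. The genuine divergence is in the convection term. The paper estimates $\int_\Omega v^N\cdot\nabla v_p^N\cdot\psi_t\,dx$ directly, keeping $\nabla v_p^N$ and paying with Young's inequality at exponents $(2p,2p,q)$ together with the Gagliardo--Nirenberg bound $\|f\|_{2p}^{2p}\le C\|\nabla f\|_p^d\|f\|_p^{2p-d}$; this requires the uniform bound $v_p^N\in L^q(0,T;W^{1,q})$ from Corollary \ref{vpbound}, and $p\ge d$ enters through $\int_0^T\|\nabla v^N\|_p^d\,dt\le \big(\int_0^T\|\nabla v^N\|_p^p\,dt\big)^{d/p}T^{(p-d)/p}$. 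You instead integrate by parts (legitimate: $\nabla\cdot v^N=0$ and $\psi_t$ has zero trace) to move the gradient onto the increment, and then need $\||v^N|^p\|_{L^q}=\|v^N\|_{L^{pq}}^p\lesssim \|v^N\|_{W^{1,p}}^p$ via the embedding $W^{1,p}(\Omega)\hookrightarrow L^{pq}(\Omega)$, valid exactly because $p\ge d$ (critical but true at $p=d$ since $pq=p^2/(p-1)<\infty$). Your route buys independence from Corollary \ref{vpbound} (no bound on $\nabla v_p^N$ is ever used) and makes the role of $p\ge d$ more transparent, at the cost of the slightly worse rate $h^{1/q}$ for the convective piece versus the paper's overall $Ch$; your Hölder--Fubini bookkeeping for both pieces checks out (e.g.\ factoring $\big(\int_t^{t+h}g^p\,ds\big)^q\le \|g\|_{L^p(0,T)}^{p(q-1)}\int_t^{t+h}g^p\,ds$ before Fubini), and the resulting bound $C(h+h^{1/q})$ is fully sufficient both for the lemma as stated and for the time-regularity clause of Definition \ref{def:weak} invoked in the proof of Theorem \ref{mainthm}, where the paper's $Ch$ is quoted but only convergence to zero uniformly in $N$ is actually needed.
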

\begin{proof}
For any fixed $t\leq T-h$ and any $\varphi\in W_N$, one has
\begin{equation}\label{eq:shift}
\left\l\tau_hv_p^N(t)-v_p^N(t),\varphi\right\r+\left\l\int_t^{t+h}v^N\cdot\nabla v_p^Nds,\varphi\right\r= \nu\left\l\int_t^{t+h}\mathcal{L}_p(v^N)ds,\varphi\right\r\, .
\end{equation}
Taking $\varphi=\tau_hv^N(t)-v^N(t)$, we now estimate each term in \eqref{eq:shift} in detail. 
 
 First, by Lemma $\ref{gady}$, it holds that
\begin{gather}
\begin{split}
\int_{\Omega}(\tau_hv_p^N(t)-v_p^N(t))\cdot(\tau_hv^N(t)-v^N(t))dx&\geq C(p)\int_{\Omega}(|\tau_hv^N|+|v^N|)^{p-2}|\tau_hv^N-v^N|^2dx\\
&\geq C(p)\|(\tau_hv^N-v^N)(t)\|_{p}^p.
\end{split}
\end{gather}

For the term $\int_{\Omega}\int_t^{t+h}\tau_hv^N(t)\cdot\mathcal{L}_p(v^N)dsdx$, Young's inequality yields:
\begin{gather}
\begin{split}
\int_{\Omega}\int_t^{t+h}\tau_hv^N(t)\cdot\mathcal{L}_p(v^N)dsdx&=-\int_t^{t+h}\int_{\Omega}(\mathcal{D}(\tau_hv^N):\mathcal{D}(v^N))|\mathcal{D}(v^N)|^{p-2}dxds\\
&\leq\int_t^{t+h}(\frac{1}{p}\|\mathcal{D}(\tau_hv^N)(t)\|_p^p+\frac{1}{q}\|\mathcal{D}(v^N)(s)\|_p^p)ds\\
&=\frac{h}{p}\|\mathcal{D}(\tau_hv^N)(t)\|_p^p+\frac{1}{q}\int_t^{t+h}\|\mathcal{D}(v^N)(s)\|_p^pds,
\end{split}
\end{gather}
where $\frac{1}{p}+\frac{1}{q}=1$.

Similarly for the integral term $\int_{\Omega}\int_t^{t+h}v^N(t)\cdot\mathcal{L}_p(v^N)dsdx$, one has
\begin{gather}
\int_{\Omega}\int_t^{t+h}v^N(t)\cdot\mathcal{L}_p(v^N)dsdx\leq \frac{h}{p}\|\mathcal{D}(v^N)(t)\|_p^p+\frac{1}{q}\int_t^{t+h}\|\mathcal{D}(v^N)(s)\|_p^pds.
\end{gather}

To estimate the term $\int_{\Omega}\int_t^{t+h}v^N\cdot\nabla v_p^N\cdot\tau_hv^N(t) dsdx$, we need the Gagliardo-Nirenberg inequality, which tells us that on a bounded domain $\Omega \times [0, T]$, for any function $f\in L^\infty(0,T;L^p(\Omega;\R^d))\cap L^p(0,T;W^{1,p}(\Omega;\R^d))$,
\begin{gather*}
\|f\|_{2p}^{2p}\leq C\|\nabla f\|_p^d\|f\|_p^{2p-d}
\leq C(\|f\|_{L^{\infty}(0,T; L^p(\Omega; \mathbb{R}^d))})\|\nabla f\|_p^d.
\end{gather*}
Therefore, 
\begin{gather}
\begin{split}
& \int_{\Omega}\int_t^{t+h}v^N\cdot\nabla v_p^N\cdot \tau_hv^N(t) dsdx \\
&\leq\int_t^{t+h}\frac{1}{2p}(\|\tau_hv^N(t)\|_{2p}^{2p}+\|v^N(s)\|_{2p}^{2p})+\frac{1}{q}\|v_p^N(s)\|_{W^{1,p}}^qds\\
&\leq C\left(h\|\nabla\tau_hv^N(t)\|_p^d+\int_t^{t+h}\|\nabla v^N(s)\|_p^d+\|v_p^N(s)\|_{W^{1,q}}^qds \right).
\end{split}
\end{gather}

Similarly, it holds that
\begin{gather}
\int_{\Omega}\int_t^{t+h}v^N\cdot\nabla v_p^N\cdot v^N(t)dsdx \leq C\left(h\|\nabla v^N(t)\|_p^d+\int_t^{t+h}\|\nabla v^N(s)\|_p^d+\|v_p^N(s)\|_{W^{1,q}}^qds\right).
\end{gather}

Overall, we have the final estimate:
\begin{gather}
\begin{split}
\|(\tau_hv^N-v^N)(t)\|_{p}^p
&\leq Ch(\|\nabla\tau_hv^N(t)\|_p^p+\|\nabla v^N(t)\|_p^p+|\nabla\tau_hv^N(t)\|_p^d+\|\nabla v^N(t)\|_p^d)\\
&+C\int_t^{t+h}\|\nabla v^N(s)\|_p^p+\|\nabla v^N(s)\|_p^d+\|v_p^N(s)\|_{W^{1,q}}^qds.
\end{split}
\end{gather}

Integrating both sides over time $t$ from 0 to $T-h$, one has
 \begin{gather}
 \begin{split}
&\|\tau_hv^N-v^N\|_{L^p(0,T-h;L^p(\Omega;\mathbb{R}^d))}^p\\
 &\leq C_1h\int_0^{T-h}\|\nabla\tau_hv^N(t)\|_p^p+\|\nabla v^N(t)\|_p^p+|\nabla\tau_hv^N(t)\|_p^d+\|\nabla v^N(t)\|_p^ddt\\
 &+C_2\int_0^{T-h}\int_t^{t+h}\|\nabla v^N(s)\|_p^p+\|\nabla v^N(s)\|_p^d+\|v_p^N(s)\|_{W^{1,q}}^qdsdt\\
 &\leq  C_1h\int_0^T \|\nabla v^N(s)\|_p^p+\|\nabla v^N(s)\|_p^ddt+C_2h\int_0^T\|\nabla v^N(s)\|_p^p+\|\nabla v^N(s)\|_p^d+\|v_p^N(s)\|_{W^{1,q}}^qds.\\
 &\leq \tilde {C}h\int_0^T\|\nabla v^N(s)\|_p^p+\|\nabla v^N(s)\|_p^d+\|v_p^N(s)\|_{W^{1,q}}^qds.
\end{split}
 \end{gather}
 With assumption $d\leq p$, $\int_0^T\|\nabla v^N(s)\|_p^d$ is bounded above by
 \[
 \left(\int_0^T\|\nabla v^N(s)\|_p^pds\right)^{\frac{d}{p}}T^{\frac{p-d}{p}}.
 \]
Following Proposition \ref{vbound} and Remark \ref{vpbound}, we conclude
\begin{equation}
\|\tau_hv^N-v^N\|_{L^p(0,T-h;L^p(\Omega;\mathbb{R}^d))}^p\leq Ch,
\end{equation}
which is a bound uniform in $N$. Thus the lemma is proved.
\end{proof}
Next we are going to derive some compactness results from the previous estimates. To reach this goal, we need the help from a variant of the Aubin-Lions Lemma~\cite{chen2012two,simon1986compact}.

The operator $\mathcal{B}$ : $X \rightarrow Y$ is called a (nonlinear) compact operator, if it maps bounded subsets of $X $ to relatively compact subsets of $Y$. Let $L_{\loc}^1(0,T;X) $ be the set of functions $f$ such that for any $0 < t_1 < t_2 < T$ ,  $f\in L^1(t_1,t_2;X)$, equipped with the semi-norms $\|f\|_{L^1(t_1,t_2;X)}$. A subset $F$ of $L_{\loc}^1(0,T;X) $ is called bounded, if for any $0 < t_1 < t_2 < T$, $F$ is bounded in $L^1(t_1,t_2;X)$.

\begin{lemma}\label{Aubin}[Aubin-Lions]
Let $X, Y$ be Banach spaces, $1 \leq p < \infty$ and $\mathcal{B}$ : $X \to Y$ be a (nonlinear) compact operator. Assume that ${F}$ is a bounded subset of $L_{\loc}^1(0,T;X) $ such that ${E} =\mathcal{B}({F}) \subset L^p(0,T;Y)$ and
\begin{itemize}
\item{$E$ is bounded in $L_{\loc}^1(0,T; Y) $,}
\item{$\|\tau_hu - u\|_{L^p(0,T-h;Y)} \to 0$ as $h \to 0+$, uniformly for $u \in E$.} 
\end{itemize} 
Then $E$ is relatively compact in $L^p(0,T;Y)$.
\end{lemma}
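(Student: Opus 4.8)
The plan is to reduce the assertion to \emph{Simon's compactness criterion} \cite{simon1986compact}, which characterizes relative compactness in $L^p(0,T;Y)$ for $1\le p<\infty$ by two conditions: (i) for every $0<t_1<t_2<T$ the set of time-integrals $\{\int_{t_1}^{t_2}w(t)\,dt:w\in E\}$ is relatively compact in $Y$; and (ii) $\sup_{w\in E}\|\tau_h w-w\|_{L^p(0,T-h;Y)}\to0$ as $h\to0+$. Condition (ii) is precisely the second hypothesis, so the whole task is to verify (i). Before that I would record one preliminary reduction: mollifying in time and combining (ii) with the $L^1_{\loc}$-bound shows that $E$ is in fact bounded in $L^p_{\loc}(0,T;Y)$, since $\|\rho_\varepsilon * w\|_{L^\infty_{\loc}(Y)}$ is controlled by the $L^1_{\loc}$-bound while $\|\rho_\varepsilon*w-w\|_{L^p_{\loc}(Y)}$ is controlled by the modulus in (ii), both uniformly in $w\in E$.

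To establish (i), I would fix $0<t_1<t_2<T$ and, for each $u\in F$, split $[t_1,t_2]=G_R\cup B_R$ with $G_R=\{t:\|u(t)\|_X\le R\}$ and $B_R$ its complement, writing $w=\mathcal{B}(u)$ so that $\int_{t_1}^{t_2}w\,dt=\int_{G_R}w\,dt+\int_{B_R}w\,dt$. On the good set the values $\mathcal{B}(u(t))$ lie in the fixed compact set $K_R:=\overline{\mathcal{B}(\{\|\cdot\|_X\le R\})}$, which is exactly where the compactness of $\mathcal{B}$ enters. By the mean-value property of the Bochner integral, $\int_{G_R}w\,dt\in |G_R|\,\overline{\mathrm{conv}}(K_R)$; since $\overline{\mathrm{conv}}(K_R)$ is compact (Mazur) and $|G_R|\in[0,t_2-t_1]$, all good parts lie in a single compact set $\tilde K_R:=\{s m:s\in[0,t_2-t_1],\,m\in\overline{\mathrm{conv}}(K_R)\}$, independent of $u$.

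For the bad set I would use that $F$ is bounded in $L^1(t_1,t_2;X)$: by Chebyshev, $|B_R|\le R^{-1}\sup_{u\in F}\int_{t_1}^{t_2}\|u\|_X\,dt\to0$ uniformly as $R\to\infty$. Combined with the equi-integrability of $t\mapsto\|w(t)\|_Y$ coming from the uniform $L^p_{\loc}$-bound above (Hölder gives $\int_{B_R}\|w\|_Y\,dt\le|B_R|^{1/q}\|w\|_{L^p(t_1,t_2;Y)}$ for $p>1$), the bad part satisfies $\|\int_{B_R}w\,dt\|_Y\le\eta_R$ with $\eta_R\to0$ uniformly in $u$. Hence $\{\int_{t_1}^{t_2}w\,dt:w\in E\}\subset \tilde K_R+\overline{B_Y(0,\eta_R)}$; a subset of a Banach space that, for arbitrarily small $\eta_R$, is covered by a compact set enlarged by a ball of radius $\eta_R$ is totally bounded, hence relatively compact. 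This verifies (i), and Simon's criterion then delivers relative compactness of $E$ in $L^p(0,T;Y)$.

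The main obstacle is that $\mathcal{B}$ is \emph{nonlinear}, so it cannot be commuted with a time-average or mollification; the standard linear device of mollifying and thereby landing inside $\mathcal{B}$ of a bounded (precompact-image) set is simply unavailable. The resolution is the good/bad splitting: compactness of $\mathcal{B}$ is invoked only pointwise on the sublevel set $\{\|u(t)\|_X\le R\}$, while the contribution of the large-value set is made negligible using only the $L^1_{\loc}$-bound (to force $|B_R|\to0$) together with the equi-integrability it produces via the time-translation hypothesis. The one technical caveat is the borderline case $p=1$, where equi-integrability must be extracted from the uniform time-continuity directly rather than from Hölder; in the application of interest $p\ge d\ge2$, so this subtlety does not arise.
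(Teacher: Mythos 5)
The paper itself does not prove Lemma \ref{Aubin} --- it is quoted with citations to \cite{chen2012two,simon1986compact} --- and your argument is, in substance, exactly the proof of the cited nonlinear compactness theorem of Chen and Liu: reduce to Simon's criterion, verify the integral condition by splitting $[t_1,t_2]$ into the sublevel set $\{t:\|u(t)\|_X\le R\}$ and its complement, invoke compactness of $\mathcal{B}$ together with Mazur's theorem on the good set and Chebyshev together with equi-integrability on the bad set, and let $R\to\infty$ to get total boundedness. The individual steps all check out: the mollification upgrade from the $L^1_{\loc}$ bound to a uniform $L^p_{\loc}$ bound is Simon's own observation (one-sided mollifiers make the forward-translation hypothesis suffice on interior intervals); the mean-value property of the Bochner integral places $\int_{G_R}w\,dt$ in $|G_R|\,\overline{\mathrm{conv}}(K_R)$, and the set $\{sm: s\in[0,t_2-t_1],\ m\in\overline{\mathrm{conv}}(K_R)\}$ is compact as a continuous image of a compact product; and a set covered, for every $\eta>0$, by a compact set plus an $\eta$-ball is indeed totally bounded. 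The only loose end is $p=1$, which the statement includes but you merely flag: your own decomposition $w=(w-\rho_\varepsilon*w)+\rho_\varepsilon*w$ already closes it, since $\int_{B_R}\|w\|_Y\,dt\le \|w-\rho_\varepsilon*w\|_{L^1(t_1,t_2;Y)}+|B_R|\,M_\varepsilon$ yields the required uniform equi-integrability with no H\"older step. With that one line added, the proof is complete for all $1\le p<\infty$ and matches the argument the paper relies on by citation.
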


Now we are ready to get a candidate of weak solutions through the limit of subsequence of $\{v^N\}_{N\geq 1}$.

\begin{proposition}\label{conv}
Let $v^N$ be the solution to the ODE system (\ref{eq:ode}). There exists a subsequence $\{N_k\}_{k\geq 1}$, $v\in L^{\infty}(0,T;U_p(\Omega)) \cap L^p(0,T; W)$ and a symmetric matrix\\ $\chi \in L^q(0,T;L^q(\Omega;\mathbb{R}^{d\times d}))$, such that as $k\rightarrow \infty$,
\begin{gather}
\begin{split}
&v^{N_{k}} \rightarrow v, \text{  strongly in } L^p(0,T;L^p(\Omega;\mathbb{R}^d)),\\
&v_p^{N_{k}} \rightarrow |v|^{p-2}v=:v_p,\text{  strongly in }L^q(0,T;L^q(\Omega;\mathbb{R}^d)),\\
&\nabla v^{N_{k}} \rightharpoonup \nabla v,\text{  weakly in }L^p(0,T;L^p(\Omega;\mathbb{R}^d)),\\
&|\mathcal{D}(v^{N_{k}})|^{p-2}\mathcal{D}(v^{N_{k}}) \rightharpoonup \chi, \text{  weakly in } L^q(0,T;L^q(\Omega;\mathbb{R}^d)).
\end{split}
\end{gather}
\end{proposition}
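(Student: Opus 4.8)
The plan is to combine the uniform \emph{a priori} bounds (Proposition \ref{vbound} and Corollary \ref{vpbound}), reflexivity of the spaces involved, and the Aubin--Lions compactness machinery (Lemma \ref{Aubin}) built on the time-shift estimate (Lemma \ref{shiftest}). I would first harvest all the \emph{weak} information. By Proposition \ref{vbound} the sequence $\{v^N\}$ is bounded in $L^p(0,T;W_0^{1,p}(\Omega;\R^d))$, which is reflexive since $p>1$; hence after passing to a subsequence $\nabla v^{N_k}$ converges weakly in $L^p(0,T;L^p(\Omega))$ to some limit, and $\{v^N\}$ is also bounded in $L^\infty(0,T;L^p)=\big(L^1(0,T;L^q)\big)^*$, giving a weak-$*$ limit. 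Next, the norm identity $(p-1)q=p$ shows $\big\||\mathcal{D}(v^N)|^{p-2}\mathcal{D}(v^N)\big\|_{L^q(L^q)}^q=\|\mathcal{D}(v^N)\|_{L^p(L^p)}^p$, so by the norm equivalence \eqref{eq:eqnorm} this sequence is bounded in $L^q(0,T;L^q(\Omega;\R^{d\times d}))$; reflexivity yields a weakly convergent subsequence with limit $\chi$, and since each term is a symmetric matrix and symmetry is a closed linear constraint, $\chi$ is symmetric. I stress that in this proposition $\chi$ is \emph{only} the weak limit; identifying $\chi=|\mathcal{D}(v)|^{p-2}\mathcal{D}(v)$ is the separate monotonicity (Minty) step done in the existence proof.

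The heart of the argument is the strong convergence of $v^{N_k}$, which I would obtain from Lemma \ref{Aubin} applied with $X=W$, $Y=L^p(\Omega;\R^d)$, and $\mathcal{B}$ the inclusion $W\hookrightarrow L^p$. Because $\Omega$ is bounded with smooth boundary and $p\ge d$, the Rellich--Kondrachov theorem makes this embedding compact, so $\mathcal{B}$ is a (linear) compact operator. The family $F=\{v^N\}$ is bounded in $L^p(0,T;X)\subset L^1_{\mathrm{loc}}(0,T;X)$ by Proposition \ref{vbound}; $E=\mathcal{B}(F)=\{v^N\}$ is bounded in $L^\infty(0,T;L^p)\subset L^1_{\mathrm{loc}}(0,T;Y)$; and the uniform-in-$N$ time continuity $\|\tau_h v^N-v^N\|_{L^p(0,T-h;L^p)}\to 0$ is exactly Lemma \ref{shiftest}. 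Lemma \ref{Aubin} then gives relative compactness of $\{v^N\}$ in $L^p(0,T;L^p)$, so along a further subsequence $v^{N_k}\to v$ strongly there. Since strong convergence implies convergence in the sense of distributions, the weak gradient limit is forced to be $\nabla v$, and the closed constraints $\nabla\cdot v=0$ and the trace/normal conditions pass to the (weak, resp. weak-$*$) limit, placing $v\in L^\infty(0,T;U_p(\Omega))\cap L^p(0,T;W)$.

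To upgrade $v_p^{N_k}$ to strong convergence I would first pass to a subsequence along which $v^{N_k}\to v$ a.e. in $\Omega\times(0,T)$ (possible from the strong $L^p$ limit). The map $\xi\mapsto|\xi|^{p-2}\xi$ is continuous, so $v_p^{N_k}=|v^{N_k}|^{p-2}v^{N_k}\to|v|^{p-2}v=v_p$ a.e. Using $|v_p^{N}|^{q}=|v^{N}|^{(p-1)q}=|v^{N}|^{p}$ once more, the strong $L^p$ convergence gives convergence of the $L^q$ norms,
\[
\|v_p^{N_k}\|_{L^q(L^q)}^q=\|v^{N_k}\|_{L^p(L^p)}^p\longrightarrow\|v\|_{L^p(L^p)}^p=\|v_p\|_{L^q(L^q)}^q .
\]
Almost-everywhere convergence together with convergence of the $L^q$ norms ($1<q<\infty$) forces strong convergence in $L^q(0,T;L^q)$ by the Brezis--Lieb lemma (equivalently, the Radon--Riesz property of the uniformly convex space $L^q$). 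A final relabelling of the nested subsequences delivers all four convergences simultaneously.

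The step I expect to be the main obstacle is the application of Aubin--Lions, and the reason is structural: the available compactness and time-continuity estimates are for the velocity $v^N$ (Lemma \ref{shiftest}), \emph{not} for the momentum $v_p^N$, so one must run the compactness argument on $v^N$ and only afterwards recover the nonlinear quantity $v_p=|v|^{p-2}v$ through a.e. convergence and convergence of norms. Setting up $\mathcal{B}$ and the spaces $X,Y$ so that the hypotheses of Lemma \ref{Aubin} are met exactly (in particular that the time-shift hypothesis is the one proved uniformly in $N$) is the delicate bookkeeping; everything else is routine reflexivity and weak-limit identification.
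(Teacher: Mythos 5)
Your proposal is correct and follows essentially the same route as the paper's proof: Aubin--Lions (Lemma \ref{Aubin}) with the compact embedding into $L^p(\Omega;\mathbb{R}^d)$ and the time-shift estimate of Lemma \ref{shiftest} for the strong $L^p$ convergence of $v^{N_k}$, reflexivity for the weak limits $\nabla v$ and $\chi$, and a.e.\ convergence plus convergence of $L^q$ norms (Radon--Riesz/Brezis--Lieb) to upgrade $v_p^{N_k}\to v_p$ to strong $L^q$ convergence. Your choice of $X=W$ rather than $X=W_0^{1,p}(\Omega;\mathbb{R}^d)$ in the Aubin--Lions step is an immaterial variant, since $W$ is a closed subspace and the embedding remains compact.
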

\begin{proof}
In Lemma~\ref{Aubin}, take $X=W_0^{1,p}(\Omega;\mathbb{R}^d)$, $Y=L^p(\Omega;\mathbb{R}^d)$, $E=F=\{v^N\}_{N\geq n_*}$ and $\mathcal{B}$ to be the embedding map from $X$ to $Y$. By Proposition \ref{vbound} and the fact that $W_0^{1,p}(\Omega;\mathbb{R}^d)$ is compactly embedded to $L^p(\Omega;\mathbb{R}^d)$, $\{v^N\}_{N\geq n_*}$ is bounded in $L^{\infty}(0,T;L^p(\Omega;\mathbb{R}^d))$, hence $E$ is a bounded set in $L^1(0,T;L^{p}(\Omega;\mathbb{R}^d))$. In addition, by Lemma~\ref{shiftest}, 
\[
\|\tau_h v^N-v^N\|_{L^{p}(0,T-h;L^{p}(\Omega;\mathbb{R}^d))}\to 0
\]
as $h\rightarrow 0+$ uniformly for $N$. Then by Lemma~\ref{Aubin}, $E=\{v^N\}$ is relatively compact in $L^p(0,T;L^p(\Omega;\mathbb{R}^d))$. Hence, there is a subsequence $\{v^{N_k}\}$ and $v\in  L^p(0,T;L^p(\Omega;\mathbb{R}^d))$ such that
\[
v^{N_{k}} \rightarrow v, \text{  strongly in } L^p(0,T;L^p(\Omega;\mathbb{R}^d)).
\]
Since $v^N\in L^\infty(0, T; U_p(\Omega)) $ with  the uniform bound $\|v_0\|_{L^p}$, one has $v\in L^\infty(0, T; U_p(\Omega)) $ with the same bound. 

The strong convergence of $v^{N_k}$ in $L^p(0, T; L^p)$ implies the almost everywhere convergence and thus  
\[
v_p^{N_{k}} \rightarrow |v|^{p-2}v:=v_p, \text{ a.e in } \Omega \times[0,T].
\]
Combining with the fact that
\[
\|v_p^{N_{k}}\|_{L^q(0,T; L^q(\Omega;\mathbb{R}^d))}=\|v^{N_{k}}\|_{L^p(0,T;L^p(\Omega; \mathbb{R}^d))}^{p/q} \rightarrow \|v\|_{L^p(0,T;L^p(\Omega; \mathbb{R}^d))}^{p/q}=\|v_p\|_{L^q(0,T; L^q(\Omega;\mathbb{R}^d))},
\]
one has
\[
v_p^{N_{k}} \rightarrow |v|^{p-2}v:=v_p,\text{  strongly in }L^q(0,T;L^q(\Omega;\mathbb{R}^d)).
\]

From Proposition \ref{vbound}, we know that $\{\nabla v^{N_k}\}$ is bounded in $L^p(0,T; L^p(\Omega; \mathbb{R}^d))$, which is a reflexive space. Then for a subsequence (without relabeling), $\nabla v^{N_k} \rightharpoonup \zeta \in L^p(0,T; L^p(\Omega; \mathbb{R}^d))$. Taking $\phi \in C^\infty(\Omega\times[0,T); \R^{d\times d})$, then
\begin{multline}
\int_0^T\int_{\Omega}\zeta : \phi dxdt=\lim_{k\rightarrow \infty}\int_0^T\int_{\Omega}\nabla v^{N_k}:\phi dxdt\\
=-\lim_{k\rightarrow \infty}\int_0^T\int_{\Omega} v^{N_k}\cdot(\nabla \cdot\phi)dxdt= -\int_0^T\int_{\Omega}v\cdot(\nabla \cdot\phi)dxdt,
\end{multline}
where the last equality follows from the fact that $v^{N_k}\rightarrow v$ in $L^p(0,T;L^p(\Omega;\mathbb{R}^d))$.
Hence, we have
$\nabla v =\zeta$. Then, $v^{N_k}$ converges weakly to $v$ in $L^p(0,T; W^{1,p})$ with $v^{N_k}\in L^p(0,T; W)$. 
So $v\in L^p(0, T; W)$.
Note that
\[
\||\mathcal{D}(v^{N_{k}})|^{p-2}\mathcal{D}(v^{N_{k}})\|_{L^q(0,T;L^q(\Omega;\mathbb{R}^d))}^q=\int_0^T\int_{\mathbb{R}^d}|\mathcal{D}(v^{N_k})|^p\,dxdt < C,
\]
which then yields that $ |\mathcal{D}(v^{N_{k}})|^{p-2}\mathcal{D}(v^{N_{k}}) \rightharpoonup \chi$,  for some $\chi \in L^q(0,T;L^q(\Omega;\mathbb{R}^d))$.
\end{proof}

\section{Existence of weak solutions}\label{5}

To establish the existence of the weak solutions, we need to  identify $\chi$ with $|\mathcal{D}(v)|^{p-2}\mathcal{D}(v)$. Now, define
$G: L^p(0, T; L^p(\Omega; \R^{d\times d}))\to L^q(0, T; L^q(\Omega; \R^{d\times d}))$ by
\begin{gather}
G(\theta):=|\theta|^{p-2}\theta.
\end{gather}
For any $\theta_1, \theta_2 \in L^p(0, T; L^p(\Omega; \R^{d\times d}))$ (see Lemma \ref{gady}, where the product of two matrices is $A:B$),
\begin{equation}
\langle \theta_1-\theta_2,G(\theta_1)-G(\theta_2)\rangle_{L^2_t(0,T;L^2_x(\Omega))}\geq 0,
\end{equation}
which indicates that $G$ is a monotone operator.  We also note that the mapping $\lambda \rightarrow \langle v_2, G(v_1+\lambda v_2)\rangle$ is continuous. 
To verify that $\chi=G(\cD v)$, we will basically apply the following Browder-Minty theorem (\cite[Theorem 10.49]{rr04}).
\begin{lemma}\label{Minty}
Let X  be a real reflexive Banach space.  Let $G$: $X\rightarrow X'$ be a nonlinear, bounded monotone operator satisfying $\forall  v_1,v_2\in X$ the mapping $\lambda \rightarrow \langle v_2,G(v_1+\lambda v_2)\rangle$ is continuous. If $w_n \rightharpoonup w$ in $X$, and $G(w_n)\rightharpoonup \beta$ in $X'$, and
\[
\limsup_{n\rightarrow \infty}~\langle w_n,G(w_n)\rangle\leq\langle w,\beta\rangle,
\]
then $G(w)=\beta$.
\end{lemma}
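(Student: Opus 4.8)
The plan is to run the classical Minty trick, which uses monotonicity to upgrade the two weak limits into a pointwise operator identity. First I would write down the monotonicity inequality furnished by the hypothesis on $G$: for every fixed $v\in X$,
\[
\langle w_n - v,\, G(w_n) - G(v)\rangle \geq 0,
\]
and expand the pairing to
\[
\langle w_n, G(w_n)\rangle - \langle w_n, G(v)\rangle - \langle v, G(w_n)\rangle + \langle v, G(v)\rangle \geq 0.
\]

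Next I would pass to the limit $n\to\infty$ term by term. The decisive observation is that only the product term $\langle w_n, G(w_n)\rangle$ needs a $\limsup$, while the remaining three terms converge in the ordinary sense: since $w_n \rightharpoonup w$ in $X$ and $G(v)\in X'$ is fixed, $\langle w_n, G(v)\rangle \to \langle w, G(v)\rangle$; since $G(w_n)\rightharpoonup \beta$ in $X'$ and $v\in X$ is fixed, $\langle v, G(w_n)\rangle \to \langle v,\beta\rangle$; and $\langle v, G(v)\rangle$ is constant. Combining these genuine limits with the hypothesis $\limsup_n \langle w_n, G(w_n)\rangle \leq \langle w,\beta\rangle$, I obtain
\[
\langle w, \beta\rangle - \langle w, G(v)\rangle - \langle v, \beta\rangle + \langle v, G(v)\rangle \geq 0,
\]
which rearranges into the key inequality
\[
\langle w - v,\, \beta - G(v)\rangle \geq 0 \qquad \text{for all } v\in X.
\]

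The final step is the Minty trick proper. For an arbitrary direction $z\in X$ and any $\lambda>0$, I would substitute $v = w - \lambda z$ into the key inequality to get $\lambda\,\langle z, \beta - G(w-\lambda z)\rangle \geq 0$, then divide by $\lambda>0$ to obtain $\langle z, \beta - G(w-\lambda z)\rangle \geq 0$. Letting $\lambda\to 0^+$ and invoking hemicontinuity (applied with $v_1 = w$, $v_2 = -z$), the quantity $\langle z, G(w-\lambda z)\rangle$ converges to $\langle z, G(w)\rangle$, so that $\langle z, \beta - G(w)\rangle \geq 0$. Since $z$ is arbitrary, replacing $z$ by $-z$ yields the reverse inequality; hence $\langle z, \beta - G(w)\rangle = 0$ for all $z\in X$, i.e. $\beta - G(w)=0$ in $X'$, which is the assertion.

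The main obstacle is purely the bookkeeping in the limit passage: one must be careful to isolate the single term requiring $\limsup$ and to verify that the cross terms converge so that the inequality is preserved. The reflexivity of $X$ underwrites the weak limits extracted earlier and the expected behaviour of the duality pairing, while the hemicontinuity hypothesis is exactly what is needed to close the final limit $\lambda\to 0^+$ and thereby remove the perturbation $\lambda z$.
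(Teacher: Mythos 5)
Your proof is correct, and it is the standard Minty monotonicity trick: monotonicity against an arbitrary test element $v$, passage to the limit in which only $\langle w_n, G(w_n)\rangle$ requires the $\limsup$ hypothesis (the cross terms converge by the weak convergences and reflexivity), the substitution $v = w - \lambda z$, and hemicontinuity to close the limit $\lambda \to 0^+$. The paper itself gives no proof of this lemma --- it cites it as Theorem 10.49 of Renardy--Rogers --- and your argument is precisely the canonical proof of that cited result, so there is nothing to reconcile; the only cosmetic remark is that the boundedness of $G$ assumed in the statement is never used in your argument (it is needed for the Browder--Minty existence theorem, not for this identification lemma), which is fine.
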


Below, we will set $X=L^p(0, T; L^q)$
and $X'=L^q(0, T; L^q)$. Moreover, recall that
\begin{equation}
V=\{v\in L^p(0,T;W_0^{1,p}(\Omega,\mathbb{R}^d)),  \nabla\cdot v=0\}.
\end{equation}
We note that $V$ is reflexive since it is a closed subspace of $L^p(0,T;W^{1,p}(\Omega,\mathbb{R}^d))$, which is reflexive. 

To establish the conditions in this lemma, we need to estimate the time regularity of the solutions.

\subsection{Time regularity and finite difference approximation}\label{subsec:timereg}

In the following lemma, we would discuss the time regularity. We aim to figure out the convergence of subsequence $\{\partial_t v_p^{N_k}\}$. 
However, $\partial_t v_p^{N_k}$ is in $W_N'$ which decreases as $N$ becomes large. Hence, we introduce the projection operator $Q_N$ so that $Q_N: W\to W_N$.
Then, we can talk about the convergence of $Q_N^*\partial_tv_p^{N_k}$, where $Q_N^*$ is the conjugate operator of $Q_N$.

Particularly, we will introduce the following. For any $u\in W$, in terms of the  Schauder basis, one has
\begin{gather*}
u=\sum_{k=1}^\infty c_k\phi_k.
\end{gather*}
Define $Q_N: W \to W_N$ to be the projection operator
\begin{gather*}
 Q_N u=\sum_{k=1}^N c_k\phi_k.
\end{gather*}
Similar as in the proof of Proposition \ref{pro:basis}, the Uniform Boundedness principle implies that $Q_N: W\to W_N\subset W$ is uniformly bounded in $N$, i.e.,
\[
\sup\limits_{N\in\mathbb{N}} \|Q_N\|_{W\to W}<\infty.
\]

For a function $v$ in $V$, 
for a.e. $t$, $v(t)\in W$. Hence, $Q_N$ is well-defined on $V$ as well and $Q_N$ is also uniformly bounded in $V$. Let $Q_N^*: V'\longrightarrow V'$ be the conjugate operator of $Q_N$, i.e. for any $u\in V$, $w\in V'$, it holds that
\begin{gather*}
\langle Q_N u,w\rangle_{L^2_t(0,T;L^2_x(\Omega))}=\langle u, Q_N^*w\rangle_{L^2_t(0,T;L^2_x(\Omega))}.
\end{gather*}

We are now ready to show the time regularity results for the sequence $\{Q_N^*\partial_t v_p^{N_k}\}$.
\begin{lemma}\label{pvpconv}
For the subsequence $\{v^{N_k}\}$ in Proposition~\ref{conv},  we can further get a subsequence of $v^{N_k}$ (without relabeling) such that
\begin{equation}
Q_{N_k}^*\partial_t v_p^{N_k} \rightharpoonup  \partial_t v_p, \text{  weakly in } V'.
\end{equation}
Here $V'$ is the dual space of $V$ as in \eqref{spV}. Moreover, for any $w \in V$, we have
\begin{gather}\label{eq:weakderivative}
\langle w,\partial_t\mathcal{P}v_p\rangle_{L^2_t(0,T;L^2_x(\Omega))}-\int_0^T\int_{\Omega}\nabla w:v\otimes v_p\,dxdt +\nu \int_0^T\int_{\Omega}\nabla w: \chi dxdt=0.
\end{gather}
\end{lemma}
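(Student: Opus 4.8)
The plan is to start from the Galerkin equation \eqref{galerkin3}, which tells us that for each fixed basis element $\phi_i$ with $i \le N_k$, the weak time derivative of $\int_\Omega \phi_i \cdot v_p^{N_k}\,dx$ equals the negative of the convection and diffusion terms. First I would rewrite the Galerkin system \eqref{galerkin2} in the distributional-in-time form: for any $w \in V$ and any scalar test function in time, the action of $Q_{N_k}^*\partial_t v_p^{N_k}$ on $w$ equals $\langle \partial_t v_p^{N_k}, Q_{N_k} w\rangle$, and since $Q_{N_k} w \in W_{N_k}$ we may substitute $\varphi = Q_{N_k} w$ into \eqref{galerkin2}. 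This gives
\begin{gather}
\langle w, Q_{N_k}^*\partial_t v_p^{N_k}\rangle = -\int_\Omega (Q_{N_k}w)\cdot(v^{N_k}\cdot\nabla v_p^{N_k})\,dx - \nu\int_\Omega \nabla(Q_{N_k}w):\mathcal{D}(v^{N_k})|\mathcal{D}(v^{N_k})|^{p-2}\,dx.
\end{gather}
The goal is then to show the right-hand side is bounded in $V'$ uniformly in $N_k$, so that a weakly-convergent subsequence exists, and to identify its limit.

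Second, I would establish the uniform $V'$ bound. The diffusion term is controlled by the uniform boundedness of $Q_{N_k}$ on $W$ (hence on $V$) proved just above the statement, together with the bound $\||\mathcal{D}(v^{N_k})|^{p-2}\mathcal{D}(v^{N_k})\|_{L^q(0,T;L^q)}\le C$ from Proposition~\ref{conv}, via the pairing estimate $|\langle \mathcal{L}_p(u),w\rangle|\le \|\mathcal{D}(u)\|_p^{p-1}\|\nabla w\|_p$ already recorded after \eqref{Deltap}. The convection term requires the Gagliardo--Nirenberg interpolation exactly as used in the proof of Lemma~\ref{shiftest}, exploiting $p\ge d$, together with Corollary~\ref{vpbound} bounding $v_p^{N_k}$ in $L^q(0,T;W^{1,q})$; this is where the $p\ge d\ge 2$ hypothesis pays off. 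Having the uniform bound, reflexivity of $V'$ yields a further subsequence with $Q_{N_k}^*\partial_t v_p^{N_k}\rightharpoonup \xi$ weakly in $V'$ for some limit $\xi$.

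Third, I would identify $\xi$ with $\partial_t v_p$ and pass to the limit term-by-term to derive \eqref{eq:weakderivative}. Fixing $w$ in the dense subspace $\mathcal{S}(U_2(\Omega))$ (so $w=\sum_k c_k\phi_k$ with finitely many terms, whence $Q_{N_k}w = w$ and $\nabla Q_{N_k}w \to \nabla w$ eventually), the convection integral converges because $v^{N_k}\to v$ strongly in $L^p(0,T;L^p)$ and $v_p^{N_k}\to v_p$ strongly in $L^q(0,T;L^q)$, which handles the product $v^{N_k}\otimes v_p^{N_k}$ after an integration by parts moving the derivative off $v_p^{N_k}$ onto the smooth test function; the diffusion integral converges using the weak convergence $|\mathcal{D}(v^{N_k})|^{p-2}\mathcal{D}(v^{N_k})\rightharpoonup \chi$ against the fixed $\nabla w$. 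To interpret the left side as $\partial_t v_p$, I would test against $w(x)\zeta(t)$ with $\zeta\in C_c^\infty(0,T)$ and check that the limiting identity says precisely that $\int_\Omega v_p\cdot w\,dx$ has weak time derivative equal to $\xi$ in the sense of Definition~\ref{weaktime}, so $\xi = \partial_t v_p$ as an element of $V'$; finally a density argument extends the identity from $\mathcal{S}(U_2(\Omega))$ to all $w\in V$ using the uniform $V'$ bound.

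The main obstacle I anticipate is the convection term, for two reasons. First, one must commute the projection $Q_{N_k}$ with the limiting procedure: for general $w\in V$ the test function is $Q_{N_k}w$, not $w$, so the diffusion and convection integrands carry $\nabla Q_{N_k}w$ rather than $\nabla w$, and one needs $Q_{N_k}w\to w$ in $V$ (which holds since $\{\phi_k\}$ is a Schauder basis for $W$, giving $Q_{N_k}w\to w$ in $W$ for a.e.\ $t$, upgraded to $V$-convergence by the uniform bound and dominated convergence) combined with the weak convergence of the nonlinear terms --- a weak-times-strong pairing that must be arranged carefully. Second, and more delicately, passing to the limit in the trilinear convection term $\int \nabla(Q_{N_k}w):(v^{N_k}\otimes v_p^{N_k})$ mixes the strong $L^p$ convergence of $v^{N_k}$, the strong $L^q$ convergence of $v_p^{N_k}$, and the convergence of the test function; one must verify these exponents are Hölder-conjugate-compatible so the product converges, which is exactly why the strong convergences from Proposition~\ref{conv} (rather than mere weak convergence of $\nabla v^{N_k}$) are essential here.
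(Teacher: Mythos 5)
Your proposal follows essentially the same route as the paper's proof: substitute $\varphi=Q_{N_k}w$ into the Galerkin identity \eqref{galerkin2}, obtain the uniform $V'$ bound via Young/H\"older and the Gagliardo--Nirenberg estimates (using $p\ge d$ and the bounds of Proposition \ref{vbound} and Corollary \ref{vpbound}), extract a weak limit by reflexivity, identify it with $\partial_t v_p$ by integrating by parts in time against smooth test functions and using the strong $L^q$ convergence of $v_p^{N_k}$, and pass to the limit in the convection term via the strong $L^1$ convergence of $v^{N_k}\otimes v_p^{N_k}$ and in the diffusion term via the weak convergence to $\chi$, concluding by density (the paper tests with divergence-free $\psi\in C_c^1(\Omega\times[0,T))$ rather than $w(x)\zeta(t)$, and justifies the final density step by checking $v\otimes v_p\in L^q(0,T;L^q)$). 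One small correction: elements of $\mathcal{S}(U_2(\Omega))$ are not finite sums $\sum_k c_k\phi_k$; the dense subspace you want is the finite linear span of $\{\phi_k\}$, on which $Q_{N_k}w=w$ eventually holds as you use.
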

\begin{proof}
For any fixed $\varphi\in V$ with $\|\varphi\|_{L^p(0,T;W_0^{1,p}(\Omega, \mathbb{R}^d))}\leq 1$.
Note that 
\[
\langle\varphi, Q_{N_k}^*\partial_t v_p^{N_k}\rangle_{L^2_t(0,T;L^2_x(\Omega))}=\langle Q_{N_k}\varphi,\partial_t v_p^{N_k}\rangle_{L^2_t(0,T;L^2_x(\Omega))}.
\]
Since $Q_{N_k}\varphi(t)\in W_N$ for a.e. $t$, and $Q_{N_k}$ is uniformly bounded, one then has by \eqref{galerkin2} that
\begin{gather*}
\begin{split}
&\langle\varphi, Q_{N_k}^*\partial_t v_p^{N_k}\rangle_{L^2_t(0,T;L^2_x(\Omega))}\\
&=-\langle Q_{N_k}\varphi,v^{N_k}\cdot\nabla v_p^{N_k}\rangle_{L^2_t(0,T;L^2_x(\Omega))}+\nu\langle Q_{N_k}\varphi,\mathcal{L}_p(v^{N_k})\rangle_{L^2_t(0,T;L^2_x(\Omega))}\\
&=\int_0^T\int_{\Omega}\nabla Q_{N_k}\varphi:(v^{N_k}\otimes v_p^{N_k})dxdt-\nu\int_0^T\int_{\Omega}\nabla Q_{N_k}\varphi:\mathcal{D}(v^{N_k})|\mathcal{D}(v^{N_k})|^{p-2}dxdt.
\end{split}
\end{gather*}
Now we estimate the right hand side term by term. For the first term, By H\"older inequality,
\begin{gather}\label{eq:transport}
\left|\int_0^T\int_{\Omega}\nabla Q_{N_k}\varphi:(v^{N_k}\otimes v_p^{N_k})dxdt\right|\leq \int_0^T\int_{\Omega}\frac{|\nabla Q_{N_k}\varphi|^p}{p}+\frac{|v^{N_k}|^{2p}}{2p}+\frac{2p-3}{2p}|v_p^{N_k}|^{\frac{2p}{2p-3}}dxdt.
\end{gather}
Since $Q_{N_k}$ is bounded in $L^p(0, T; W_0^{1,p})$, $\int_0^T\int_{\Omega}|\nabla Q_{N_k}\varphi|^p\,dxdt$ is bounded uniformly in $N_k$. Applying Gagliardo-Nirenberg inequality, one has
\begin{align}\label{e:gagv}
\|v^{N_k}\|_{2p}^{2p}\leq C\|\nabla v^{N_k}\|_p^d\|v^{N_k}\|_p^{2p-d},
\end{align}
and
\begin{align}\label{e:gagvp}
\|v_p^{N_k}\|_{\frac{2p}{2p-3}}^{2p/(2p-3)}\leq C\|\nabla v_p^{N_k}\|_q^{d/(2p-3)}\|v_p^{N_k}\|_q^{(2p-d)/(2p-3)}.
\end{align}
Using the estimates in Proposition \ref{vbound}, and the fact that $\frac{d}{2p-3}\leq q$ and $\frac{2p-d}{2p-3}\leq q$ for $p\geq d\geq 2$, one concludes the boundedness.

For the second term, it is easy to check the boundedness, since H\"older inequality yields that
\[
\left|\int_0^T\int_{\Omega}\nabla Q_{N_k}\varphi:\mathcal{D}(v^{N_k})|\mathcal{D}(v^{N_k})|^{p-2}dxdt\right|\leq \int_0^T\int_{\Omega} \frac{|\nabla Q_{N_k}\varphi|^p}{p}+\frac{|\mathcal{D}(v^{N_k})|^p}{q}dxdt.
\]
Again, using the estimates in Proposition \ref{vbound}, one gets the boundedness.

Therefore, $Q_{N_k}^*\partial_t v_p^{N_k}$ is bounded in $V'$. Since $V$ is reflexive, then there is a subsequence (without relabeling) and $\alpha \in V'$ such that
\begin{gather}
Q_{N_k}^*\partial_t v_p^{N_k} \rightharpoonup  \alpha \text{ weakly in } V'.
\end{gather}
For any $\psi \in C_c^{1}(\Omega\times [0, T)])$ with $\nabla\cdot \psi=0$, one has
\begin{gather*}
\begin{split}
\langle\psi,Q_{N_k}^*\partial_t v_p^{N_k}\rangle_{L^2_t(0,T;L^2_x(\Omega))}&=\langle Q_{N_k}\psi,\partial_t v_p^{N_k}\rangle_{L^2_t(0,T;L^2_x(\Omega))}\\
&=-\langle\partial_tQ_{N_k}\psi,  v_p^{N_k}\rangle_{L^2_t(0,T;L^2_x(\Omega))}-\int_{\Omega}Q_{N_k}\psi(x,0) v_p^{N_k}(x,0)dx\\&\to -\langle\partial_t\psi,  v_p\rangle_{L^2_t(0,T;L^2_x(\Omega))}-\int_{\Omega}\psi(x,0) v_p(x,0)dx,
\end{split}
\end{gather*}
as $k\to \infty$. Note that the completion of $C_c^1(\Omega\times [0, T)])$ with zero divergence in $W_0^{1,p}$ is $V$. Hence $\alpha=\partial_t v_p$ in $V'$.

Now for any $\varphi\in C_c^{1}( \Omega\times [0, T))$ with $\nabla\cdot \varphi=0$, by the convergence in Proposition~\ref{conv} one clearly has $v^{N_k}\otimes v_p^{N_k}\to v\otimes v_p$ strongly in $L^1(0,T; L^1(\Omega;\R^{d\times d}))$ and hence it holds that
\[
\l \varphi, \partial_t  v_p\r_{L^2_t(0,T;L^2_x(\Omega))}
-\int_0^T\int_{\Omega}\nabla\varphi : v\otimes v_p\,dxdt+\nu\int_0^T\int_{\Omega}\nabla\varphi:\chi\,dxdt=0.
\]
Similar to the estimate in \eqref{eq:transport}, $v\otimes v_p\in L^q(0, T; L^q(\Omega;\R^{d\times d}))$. Hence, by density argument, we can replace $\varphi$ by any $w\in V$.
\end{proof}

\begin{remark}
We do not have the convergence $v^N\otimes v_p^N\to v\otimes v_p$ in $L^q(0, T; L^q(\Omega;\R^{d\times d}))$.
\end{remark}

Next, we need a technical result to obtain the chain rule for the weak time derivative of the $L^p$ integral for $v$. In the proof of the original work \cite{liliu17}, there is a gap to establish the chain rule (specifically, in the paragraph and equation below Equation (97)). The method here can be used to fill that gap there. 
Define the energy function
\begin{gather}
H(t):=\frac{1}{q}\int |v(x,t)|^p\,dx
=\frac{1}{q}\int |v_p(x,t)|^q\,dx.
\end{gather}
Consider the finite time differences
\begin{gather}
D_h^+g(t):=\frac{1}{h}(\tau_h g(t)-g(t))
\end{gather}
and
\begin{gather}
D_h^-g(t):=\frac{1}{h}(g(t)-\tau_{-h}g(t)).
\end{gather}
We have
\begin{proposition}\label{timedev}
The time differences
$D_h^+  v_p(t)1_{[0, T-h]}(t)$
and $D_h^-  v_p(t)1_{[h, T]}(t)$ are bounded uniformly in $V'$  and both have subsequences (without labelling) converge weakly to $\partial_t  v_p$ as $h\to 0$ in $V'$.
Moreover, there is a version of the mapping $t\mapsto H(t)$ that is continuous with $H(0)=\frac{1}{q}\|v_0\|_{L^p}^p$ and satisfies for any $0\le s\le t\le T$:
\begin{equation}
   \int_s^t\int_{\Omega} v(\tau)\partial_t  v_p(\tau)\,dxd\tau=H(t)-H(s).
\end{equation}
\end{proposition}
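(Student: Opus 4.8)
The plan is to read the identity as the chain rule for a convex potential evaluated along the curve $t\mapsto v_p(t)$, and to establish it first at the level of finite differences, where convexity is directly available, and then pass to the limit. The key structural observation is the convex duality between $v$ and $v_p$: since $v_p=|v|^{p-2}v=\nabla F(v)$ with $F(\xi)=\tfrac1p|\xi|^p$, the Legendre conjugate $F^*(w)=\tfrac1q|w|^q$ satisfies $\nabla F^*(v_p)=|v_p|^{q-2}v_p=v$ and $\int_\Omega F^*(v_p(\tau))\,dx=H(\tau)$. Thus the target equality is exactly $\int_s^t\int_\Omega \nabla F^*(v_p)\cdot\partial_t v_p\,dxd\tau=H(t)-H(s)$.

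First I would settle the boundedness and weak convergence of the difference quotients, which is the first assertion. Viewing $v_p\in L^q(0,T;L^q)\hookrightarrow L^q(0,T;W^{-1,q})$ and recalling from Lemma~\ref{pvpconv} that its distributional time derivative (tested against divergence-free fields) is $\partial_t v_p\in V'=L^q(0,T;W^{-1,q})$, the standard difference-quotient theory for Bochner--Sobolev functions gives the uniform bounds $\|D_h^+v_p\,1_{[0,T-h]}\|_{V'}\le\|\partial_t v_p\|_{V'}$ and $\|D_h^-v_p\,1_{[h,T]}\|_{V'}\le\|\partial_t v_p\|_{V'}$, together with $D_h^+v_p\,1_{[0,T-h]}\rightharpoonup\partial_t v_p$ and $D_h^-v_p\,1_{[h,T]}\rightharpoonup\partial_t v_p$ weakly in the reflexive space $V'$.

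The core is a convexity sandwich. Applying the subgradient inequalities for $F^*$ with $a=v_p(\tau+h)$, $b=v_p(\tau)$ (forward) and $a=v_p(\tau)$, $b=v_p(\tau-h)$ (backward), I get, pointwise in $x$ and $\tau$,
\[
v(\tau)\cdot D_h^+v_p(\tau)\ \le\ D_h^+\!\Big[\tfrac1q|v_p(\tau)|^q\Big]\quad\text{and}\quad D_h^-\!\Big[\tfrac1q|v_p(\tau)|^q\Big]\ \le\ v(\tau)\cdot D_h^-v_p(\tau).
\]
The crucial point is that the forward \emph{lower} bound and the backward \emph{upper} bound both evaluate $v$ at time $\tau$, so after integrating in $x$ (which turns $D_h^{\pm}[\tfrac1q|v_p|^q]$ into $D_h^{\pm}H$) and over $\tau\in[s,t]$, I only ever pair the fixed field $v\,1_{[s,t]}\in V$ against the weakly convergent quotients; no control of $v(\tau\pm h)-v(\tau)$ in the $V$-norm is needed (we only have this in $L^p(L^p)$). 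Using $\int_s^t D_h^+H\,d\tau=\tfrac1h\big(\int_t^{t+h}H-\int_s^{s+h}H\big)$ and the backward analogue, I let $h\to0^+$: the weak--strong pairings converge to $\int_s^t\int_\Omega v\cdot\partial_t v_p\,dxd\tau$, while the telescoped averages converge to $H(t)-H(s)$ at every pair of Lebesgue points of $H\in L^\infty(0,T)$. The forward bound then yields $\int_s^t\int_\Omega v\cdot\partial_t v_p\le H(t)-H(s)$ and the backward bound the reverse, so the two squeeze to equality for a.e. $s,t$.

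Finally I would produce the continuous representative and fix the initial value. Since $v\in L^p(0,T;W)$ and $\partial_t v_p\in L^q(0,T;W^{-1,q})$, the integrand $\tau\mapsto\int_\Omega v(\tau)\cdot\partial_t v_p(\tau)\,dx$ lies in $L^1(0,T)$, so $t\mapsto\int_{s_0}^t\!\int_\Omega v\cdot\partial_t v_p$ is absolutely continuous; defining $H$ through this integral gives the continuous version satisfying the identity for all $0\le s\le t\le T$. For the initial value I would use that $v_p\in C([0,T];W^{-1,q})$ with $v_p(0)=|v_0|^{p-2}v_0$, the weak initial datum recorded by the Galerkin limit through Definition~\ref{weaktime} and the boundary computation in Lemma~\ref{pvpconv}; boundedness in $L^q$ upgrades this to $v_p(t)\rightharpoonup|v_0|^{p-2}v_0$ weakly in $L^q$, so lower semicontinuity of the norm gives $\liminf_{t\to0^+}H(t)\ge\tfrac1q\|v_0\|_{L^p}^p$, and the matching upper bound follows from the chain-rule identity together with the energy bound of Proposition~\ref{vbound}, yielding $H(0)=\tfrac1q\|v_0\|_{L^p}^p$. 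The step I expect to be the main obstacle is the limit passage in the convexity sandwich: one must choose forward versus backward differences precisely so that the surviving evaluations of $v$ sit at time $\tau$, reducing everything to the single weak--strong limit $\int_s^t\int_\Omega v\cdot D_h^{\pm}v_p\to\int_s^t\int_\Omega v\cdot\partial_t v_p$, and one must respect that the averaged $H$-differences converge only at Lebesgue points, so the identity is obtained a.e. first and only then promoted to all $s,t$ via the continuous representative; pinning down the initial value rigorously is the secondary delicate point.
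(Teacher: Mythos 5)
Your central mechanism is exactly the paper's: the two one-sided bounds you derive from the subgradient inequality for $F^*(w)=\frac{1}{q}|w|^q$ are precisely the paper's Young-inequality estimates $v(\tau)\cdot v_p(\tau\mp h)\le \frac{1}{p}|v(\tau)|^p+\frac{1}{q}|v_p(\tau\mp h)|^q$, arranged the same way (forward difference for the upper bound, backward for the lower, with $v$ always evaluated at $\tau$), followed by the same weak--strong pairing of $v\,1_{[s,t]}\in V$ against the quotients, the Lebesgue-point argument for the telescoped averages, and promotion to a continuous representative. You differ from the paper in two components. For the uniform $V'$ bounds on $D_h^{\pm}v_p$, the paper returns to the Galerkin system, bounds $Q_N^* D_h^- v_p^N$ uniformly in $N$ and $h$ by repeating the estimates of Lemma \ref{pvpconv}, and then lets $N\to\infty$ using the strong convergence of $v_p^N$; you instead invoke the standard difference-quotient theory for Bochner--Sobolev functions, which is legitimate because Lemma \ref{pvpconv} (which precedes this proposition) already supplies $\partial_t \mathcal{P}v_p\in V'$, and it even yields local strong convergence of the quotients rather than merely subsequential weak convergence. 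One caution: $V'$ is $L^q(0,T;W')$ with $W'$ the dual of the divergence-free space $W$ (a quotient of $W^{-1,q}$), not $L^q(0,T;W^{-1,q})$ itself, so the $W'$-valued function whose quotients you control is $\mathcal{P}v_p$, i.e.\ $v_p$ modulo gradients; your parenthetical ``tested against divergence-free fields'' is doing real work and should be made explicit.

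For the initial value $H(0^+)=\frac{1}{q}\|v_0\|_{L^p}^p$ your route is genuinely different from the paper's (which passes a uniform-in-$N$ Galerkin estimate to the limit), and it is arguably more robust, but as written it has one fixable gap: since $v_p(t)$ is continuous at $t=0$ only as an element of $W'$, a weak $L^q$ subsequential limit $\ell$ of $v_p(t_n)$, $t_n\to 0^+$, agrees with $|v_0|^{p-2}v_0$ only against divergence-free test fields, hence only up to a gradient field. So you may not assert $v_p(t)\rightharpoonup |v_0|^{p-2}v_0$ in $L^q$, nor apply norm lower semicontinuity directly to $|v_0|^{p-2}v_0$. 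The repair is short: approximate $v_0\in U_p(\Omega)$ in $L^p$ by fields $\varphi_n\in\cU$ (which are admissible, divergence-free tests), so that $\int_\Omega \ell\cdot \varphi_n\,dx=\int_\Omega |v_0|^{p-2}v_0\cdot\varphi_n\,dx\to \|v_0\|_{L^p}^p$, whence $\int_\Omega \ell\cdot v_0\,dx=\|v_0\|_{L^p}^p$ and, by H\"older, $\|\ell\|_{L^q}\ge \|v_0\|_{L^p}^{p-1}$. Weak lower semicontinuity then gives $\liminf_{t\to 0^+}H(t)\ge \frac{1}{q}\|v_0\|_{L^p}^p$, and your matching upper bound from $\|v(t)\|_{L^p}\le \|v_0\|_{L^p}$ a.e.\ (Propositions \ref{vbound} and \ref{conv}) together with the continuity of $H$ closes the argument. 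With that two-line insertion your proposal is a complete and correct proof.
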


\begin{proof}
Take $\varphi\in V$, $T>h>0$. Let $Q_N^*$ be the conjugate operator of $Q_N$ as defined before. Then by \eqref{galerkin2}, one has
\begin{multline}
\int_0^{T}\int_{\Omega} \varphi(t)\cdot Q^*_N D_h^-  v_p\,dx dt=\int_h^T\frac{1}{h}\int_{t-h}^{t}\int_{\Omega} \nabla_x Q_N \varphi(t): v^N(\tau) \otimes v_p^N(\tau) dx d\tau dt \\
-\nu\int_h^T\frac{1}{h}\int_{t-h}^t\int_{\Omega}\nabla_x Q_N  \varphi(t): \mathcal{D}(v^N(\tau))|\mathcal{D}v^N(\tau)|^{p-2}dxd\tau dt
=:I_1+I_2.
\end{multline}
Next, we estimate these two terms. By Young's inequality,
\begin{equation}
\begin{split}
|I_1| &\leq \int_h^T\int_{\Omega}\frac{|\nabla_x Q_N \varphi(t)|^p}{p}dt+\int_{h}^T\frac{1}{h}\int_{t-h}^t\int_{\Omega}\frac{|v^N(\tau) \otimes v_p^N(\tau)|^q}{q}\, dxd\tau dt\\
& \leq \frac{C_p}{p}\|\varphi\|_{L^p(0;T;W^{1,p}_0)}^p+\int_0^{T}\int_{\Omega}\frac{|v^N(s) \otimes v_p^N(s)|^q}{q}\, ds,
\end{split}
\end{equation}
which is bounded as in \eqref{eq:transport}, \eqref{e:gagv} and \eqref{e:gagvp}. In addition, one similarly has
\begin{equation}
\begin{split}
    |I_2| &\leq \int_h^T\int_{\Omega}\frac{|\nabla_x Q_N \varphi(t)|^p}{p}dt+\int_{h}^T\frac{1}{h}\int_{t-h}^t\int_{\Omega}\frac{\left(|\mathcal{D}v^N(\tau)|^{p-1}\right)^q}{q}\, dxd\tau dt\\
    &\leq \frac{C_p}{p}\|\varphi\|_{L^p(0;T;W^{1,p}_0)}^p+\int_0^{T}\int_{\Omega}\frac{\left|\mathcal{D}v^N(s)\right|^{(p-1)q}}{q}\, dxds,
\end{split}
\end{equation}
which is also bounded due to the fact that $\mathcal{D}(v^N)$ is bounded in $L^p(0,T;L^p(\Omega))$. Hence, $ \frac{1}{h}(Q^*_N\mathcal{P}v_p^N(t)-Q^*_N\mathcal{P}v_p^N(t-h))1_{[h, T]}(t)$ is uniformly (in $N$ and $h$) bounded in $V'$. Letting $N\to \infty$, by the strong convergence of $v_p^N$ to $v_p$, we have that $D_h^- v_p(t) 1_{[h,T]}(t)$ is uniformly (in $h$) bounded in  $V'$.
Similar arguments hold for $D_h^+  v_p(t)1_{[0, T-h]}$.

Now, up to subsequence, as $h\to 0$, $D_h^-  v_p(t) 1_{[h,T]}(t)$ would have a weak limit in $V'$, denoted by $\gamma$. By pairing with a smooth function,
it is not hard to identify that $\gamma$ is just $\partial_t v_p$. Similarly, $D_h^+  v_p(t)1_{[0, T-h]}$ would have a subsequence converging to $\partial_t v_p$ as well.

Now, since $v\in V$, let $s,t\in (h,T)$, we have
\begin{equation}
\begin{split}
\int_s^t \int_{\Omega}v(\tau) D_h^- v_p(\tau) \, dxd\tau&=\frac{1}{h}\int_s^t\int_{\Omega} |v(\tau)|^p-v(\tau)v_p(\tau-h)\, dxd\tau\\
&\geq \frac{1}{h}\int_s^t\int_{\Omega}  |v(\tau)|^p-\frac{|v(\tau)|^p}{p}-\frac{1}{q}|v_p(\tau-h)|^q\, dxd\tau\\
&= \frac{1}{qh}\int_t^{t-h}\int_{\Omega}|v(\tau)|^p\, dxd\tau-\frac{1}{qh}\int_s^{s-h}\int_{\Omega}|v(\tau)|^p\, dxd\tau.
\end{split}
\end{equation}
Then, as $h\to 0^+$, the left hand side converges to $\int_s^t\int_{\Omega}v(\tau)\partial_t  v_p(\tau)\,dxd\tau$ due to the weak convergence of $D_h^-  v_p 1_{[h, T]}$ and thus $D_h^-  v_p 1_{[s, t]}$ in $V'$. The right hand side tends to $\frac{1}{q}\|v(t)\|^p_{L^p}-\frac{1}{q}\|v(s)\|^p_{L^p}$ as $h$ tends to zero, for almost every $t,s\in (0,T)$. Hence, for almost every $s<t, s, t\in (0, T)$ one has
\[
\int_s^t\int_{\Omega}v(\tau)\partial_t  v_p(\tau)\,dxd\tau \ge H(t)-H(s).
\]
Moreover, one may do the same thing for $D_h^+  v_p(t)$ to get for almost every $s, t\in (0, T-h)$ that
\[
\int_s^t\int_{\Omega}v(\tau) D_h^+  v_p(\tau)\,dx d\tau
\le \frac{1}{qh}\int_t^{t+h}\int_{\Omega}|v(\tau)|^p\, dxd\tau-\frac{1}{qh}\int_s^{s+h}\int_{\Omega}|v(\tau)|^p\, dxd\tau.
\]
By the same argument, one has for 
almost every $s<t, s, t\in (0, T)$ one has
\[
\int_s^t\int_{\Omega}v(\tau)\partial_t  v_p(\tau)\,dxd\tau \le H(t)-H(s).
\]
Hence, we have the chain to hold
\begin{equation}\label{Hdev}
\begin{split}
H(t)-H(s)&= \int_s^t\int_{\Omega} v(\tau)\partial_t   v_p(\tau)\,dxd\tau.
\end{split}
\end{equation}
for almost every $s, t\in (0, T)$ with $s<t$. Since the right hand side is continuous in $s, t$ so $H$ can be made into a continuous function.

Moreover, to see $H(0^+)=\frac{1}{q}\|v_0\|_{L^p}^p$, we note that
\[
\left|\frac{1}{q}\int_{\Omega}|v^N(t)|^p\,dx
-\frac{1}{q}\int_{\Omega}|v^N(0)|^p\,dx\right|
\le \left|\int_0^t\int_{\Omega} v^N(\tau)\partial_t v_p^N(\tau)\,d\tau \right| \le Ct,
\]
where $C$ is uniform in $N$. As $N\to \infty$, 
$\frac{1}{q}\int_{\Omega}|v^N(0)|^p\,dx\to \frac{1}{q}\|v_0\|_{L^p}^p$ and for almost every $t$, $\frac{1}{q}\int_{\Omega}|v^N(t)|^p\,dx$ converges to $H(t)$. Hence, $H(0+)$ is given as mentioned. This also means in \eqref{Hdev} we can take $t=T$ and $s=0$.
\end{proof}

\subsection{Existence of weak solutions and the energy dissipation equality}

In this subsection, we first identify $\chi$ and then prove the existence of weak solutions. 
\begin{lemma}\label{intconv}
In $L^q(0, T; L^q(\Omega; \R^{d\times d}))$, we have
\begin{gather}
\chi=|\cD v|^{p-2}\cD v.
\end{gather}
In other words, for any $\varphi\in C_c^\infty([0,T)\times\Omega)$,
\begin{gather}
\int_0^T\int_\Omega\nabla\varphi:\chi dxdt=\int_0^T\int_\Omega\nabla\varphi:\mathcal{D}(v)|\mathcal{D}(v)|^{p-2}dxdt.
\end{gather}
\end{lemma}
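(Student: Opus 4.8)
The plan is to prove $\chi = |\mathcal{D}(v)|^{p-2}\mathcal{D}(v)$ by applying the Browder--Minty lemma (Lemma \ref{Minty}) to the monotone operator $G(\theta) = |\theta|^{p-2}\theta$ acting on the reflexive Banach space $X = L^p(0,T;L^p(\Omega;\mathbb{R}^{d\times d}))$. We already know the main ingredients: $\mathcal{D}(v^{N_k}) \rightharpoonup \mathcal{D}(v)$ weakly in $X$ (this follows from $\nabla v^{N_k} \rightharpoonup \nabla v$ in Proposition \ref{conv}, since $\mathcal{D}$ is a bounded linear operator), and $G(\mathcal{D}(v^{N_k})) = |\mathcal{D}(v^{N_k})|^{p-2}\mathcal{D}(v^{N_k}) \rightharpoonup \chi$ weakly in $X' = L^q(0,T;L^q(\Omega;\mathbb{R}^{d\times d}))$. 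The monotonicity and hemicontinuity hypotheses were already verified in the paragraph introducing $G$. Thus the entire proof reduces to establishing the single remaining inequality
\[
\limsup_{k\to\infty} \int_0^T\!\!\int_\Omega |\mathcal{D}(v^{N_k})|^{p-2}\mathcal{D}(v^{N_k}) : \mathcal{D}(v^{N_k}) \, dxdt \le \int_0^T\!\!\int_\Omega \chi : \mathcal{D}(v)\, dxdt.
\]

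To produce this inequality I would exploit the energy structure on both the approximate and limiting levels. On the Galerkin side, the energy equality \eqref{eq:energy} from Proposition \ref{vbound}, integrated in time, gives
\[
\nu \int_0^T\!\!\int_\Omega |\mathcal{D}(v^{N_k})|^p \, dxdt = \tfrac{1}{q}\big(\|v^{N_k}(0)\|_{L^p}^p - \|v^{N_k}(T)\|_{L^p}^p\big) = H_{N_k}(0) - H_{N_k}(T),
\]
so the left-hand side of the desired $\limsup$ is controlled by the \emph{drop} in kinetic energy. On the limiting side, the chain rule from Proposition \ref{timedev}, combined with the weak formulation \eqref{eq:weakderivative} tested against $v$ itself (using that $v \in V$ and that the transport term vanishes by the divergence-free, zero-boundary structure, exactly as in the \emph{a priori} computation), yields
\[
\nu \int_0^T\!\!\int_\Omega \chi : \mathcal{D}(v)\, dxdt = H(0) - H(T).
\]
Hence the target inequality is equivalent to the lower semicontinuity statement
\[
\liminf_{k\to\infty}\big(H_{N_k}(0) - H_{N_k}(T)\big) \ge H(0) - H(T),
\]
i.e. to controlling the energy at the endpoints.

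The main obstacle is precisely this endpoint control: one needs $H_{N_k}(0) \to H(0)$ and $\liminf_k H_{N_k}(T) \le H(T)$ in the correct directions. The initial value is the safe end, since $H_{N_k}(0) = \frac{1}{q}\|v_0^{N_k}\|_{L^p}^p \to \frac{1}{q}\|v_0\|_{L^p}^p = H(0)$ by construction of the Galerkin data together with $H(0^+) = \frac{1}{q}\|v_0\|_{L^p}^p$ established in Proposition \ref{timedev}. The terminal value is the delicate one: strong $L^p(0,T;L^p)$ convergence of $v^{N_k}$ (Proposition \ref{conv}) gives convergence of $\|v^{N_k}(t)\|_{L^p}^p$ for \emph{almost every} $t$, but not a priori at the single time $t=T$, so I would either invoke the continuous version of $t\mapsto H(t)$ from Proposition \ref{timedev} and pass to the limit along a sequence $t_j \uparrow T$ of good times where pointwise convergence holds, or use weak lower semicontinuity of the $L^p$-norm (Fatou/convexity) to obtain $H(T) \le \liminf_k H_{N_k}(T)$, which is exactly the inequality needed in the right direction.

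Once the $\limsup$ inequality is in hand, Lemma \ref{Minty} applies with $w_n = \mathcal{D}(v^{N_k})$, $w = \mathcal{D}(v)$, $\beta = \chi$, and concludes $G(\mathcal{D}(v)) = \chi$, which is the assertion. I would then record the restated integral form against test functions $\varphi \in C_c^\infty([0,T)\times\Omega)$ as an immediate consequence of the identity $\chi = |\mathcal{D}(v)|^{p-2}\mathcal{D}(v)$ in $L^q$. The subtle bookkeeping point to be careful about throughout is that the weak derivative identity \eqref{eq:weakderivative} is stated for test functions in $V$ and must be paired with $v \in V$ legitimately, which is justified by the chain rule of Proposition \ref{timedev} (this is exactly where the earlier gap in \cite{liliu17} is repaired).
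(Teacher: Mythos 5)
Your overall strategy is the same as the paper's: apply the Browder--Minty lemma (Lemma \ref{Minty}) to $G$, compute $\nu\int_0^T\int_\Omega|\cD(v^{N_k})|^p\,dxdt=H_{N_k}(0)-H_{N_k}(T)$ from the Galerkin energy identity \eqref{eq:energy}, and compute $\nu\int_0^T\int_\Omega\cD(v):\chi\,dxdt=H(0)-H(T)$ by testing \eqref{eq:weakderivative} with $w=v$ and invoking the chain rule of Proposition \ref{timedev}. Two steps, however, are not adequately justified. The smaller one: the vanishing of the trilinear term $\int_0^T\int_\Omega\nabla v:(v\otimes v_p)\,dxdt=0$ does \emph{not} follow ``exactly as in the a priori computation,'' because $v$ is only a weak limit; the formal cancellation must be justified. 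The paper does this by extending $v$ by zero, mollifying, and passing to the limit using the integrability $v\in L^{2p}$, $v_p\in L^{2p/(2p-3)}$, $\nabla v\in L^p$ coming from \eqref{e:gagv}--\eqref{e:gagvp}; some argument of this kind is genuinely required, not optional.

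The more serious gap is the terminal-time control. First, your displayed ``equivalent'' inequality is reversed: Minty requires
\begin{equation*}
\limsup_{k\to\infty}\bigl(H_{N_k}(0)-H_{N_k}(T)\bigr)\le H(0)-H(T),
\quad\text{i.e.}\quad \liminf_{k\to\infty}H_{N_k}(T)\ge H(T),
\end{equation*}
which is what your subsequent sentence states, so that part is a slip. But neither of your two proposed mechanisms actually delivers this inequality. The Fatou/weak lower semicontinuity route needs $v^{N_k}(T)\rightharpoonup v(T)$ in $L^p(\Omega)$, which is not available: $v\in L^\infty(0,T;L^p)$ has no canonical value at the single time $T$, and $H(T)$ is only defined through the continuous version of $H$ constructed in Proposition \ref{timedev}. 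The good-times route fails in direction: since each $H_{N_k}$ is nonincreasing, $H_{N_k}(T)\le H_{N_k}(t_j)$ for good times $t_j<T$ gives only $\limsup_k H_{N_k}(T)\le H(T)$, the \emph{opposite} bound. The paper closes this by proving two-sided convergence $H_{N_k}(t)-H_{N_k}(s)\to H(t)-H(s)$ for a.e.\ $s,t$ and then extending to $s=0$, $t=T$ via the uniform-in-$N$ continuity estimate at the end of the proof of Proposition \ref{timedev} (obtaining in fact equality, $\lim_k\int_0^T\int_\Omega|\cD(v^{N_k})|^p\,dxdt=\int_0^T\int_\Omega\cD(v):\chi\,dxdt$). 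A repair staying within your framework: apply Minty on $(0,t_j)$ for good times $t_j$ at which $\|v^{N_k}(t_j)\|_{L^p}\to\|v(t_j)\|_{L^p}$ (testing \eqref{eq:weakderivative} with $w=v\,1_{[0,t_j]}\in V$), conclude $\chi=|\cD(v)|^{p-2}\cD(v)$ a.e.\ on $\Omega\times(0,t_j)$, and let $t_j\uparrow T$.
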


\begin{proof}	
Taking $w=v $ in \eqref{eq:weakderivative}, due to the fact the $\chi$ is symmetric, one has
\begin{gather*}
	\langle v, \partial_t v_p\rangle_{L^2_t(0,T;L^2_x(\Omega))}=-\nu\int_0^T\int_\Omega\nabla v:\chi dxdt
 =-\nu\int_0^T\int_\Omega\cD(v):\chi dxdt.
\end{gather*}
In fact, we need to show
\[
\int_0^T\int_{\Omega}\nabla v:v\otimes v_pdxdt=0
\]
To justify this, first recalling~\eqref{e:gagv} and~\eqref{e:gagvp} one has $v\in L^{2p}(0, T; L^{2p}(\Omega))$ and $v_p\in L^{r}(0, T; L^r(\Omega))$ with $r=\frac{2p}{2p-3}$. We can extend $v$ to be defined in $\R^d$ such that $v=0$ for $x\notin\Omega$. Then, $v\in L^{2p}(0, T; L^{2p}(\R^d))$ and $\nabla v\in L^p(0, T; L^p(\R^d))$. Then, $v_{\e}:=v*J_{\e}$ for a mollifier $J_{\e}$. Then, one has $v_{\e}\to v$ in $L^{2p}(0, T; L^{2p}(\R^d))$, $\nabla v_{\e}\to \nabla v$ in $L^p(0, T; L^p(\R^d))$ and $(v_{\e})_p\to v_p$ in $L^{r}(0, T; L^r(\Omega))$. Hence, due to $\nabla\cdot v_{\e}=0$, one has
\[
0=\int_0^T\int_{\Omega}\nabla v_{\e}:v_{\e}\otimes (v_{\e})_pdxdt
\to \int_0^T\int_{\Omega}\nabla v:v\otimes v_pdxdt.
\]

On the other hand, with the same notation as in Lemma \ref{pvpconv} and Proposition \ref{timedev},
\begin{gather}\label{eq:aux0}
\frac{\|v^{N_k}(t)\|^p_{L^p}}{q}-\frac{\|v^{N_k}(s)\|^p_{L^p}}{q}
\to \frac{\|v(t)\|^p_{L^p}}{q}-\frac{\|v(s)\|^p_{L^p}}{q}=\int_s^t \int_{\Omega}v(\tau) \partial_t  v_p(\tau)\, dxd\tau.
\end{gather}
By the continuity argument same as in the end of the proof on Proposition \ref{timedev}, we can take $t=T$ and $s=0$. Besides, the left hand side of \eqref{eq:aux0} equals
\begin{gather}\label{eq:aux1}
\langle v^{N_k}, Q_N^*\partial_t v_p^{N_k}\rangle_{L^2_t(0,T;L^2_x(\Omega))}
=\langle v^{N_k}, \partial_t v_p^{N_k}\rangle_{L^2_t(0,T;L^2_x(\Omega))}
=-\int_0^T\int_{\Omega}|\cD{v^{N_k}}|^pdxdt.
\end{gather}
Hence, one actually has,
\[
\lim_{k\rightarrow\infty}\int_0^T\int_{\Omega}|\cD{v^{N_k}}|^pdxdt =\int_0^T\int_{\Omega} \cD(v):\chi dxdt.
\]

By the property of $G$ and Lemma \ref{Minty}, one has
\[
\chi=G(\cD(v)).
\]
\end{proof}

We get the existence of weak solution to $p$-Navier-Stokes equations.
\begin{theorem}\label{mainthm}
Let $\Omega$ be a bounded domain in $\mathbb{R}^d$ with $C^{\infty}$ boundary and $p\geq d\geq2$. Let $v_0\in U_p(\Omega)$.  There exists a global weak solution to  initial/boundary value problem of the $p$-Navier-Stokes equations (Equations (\ref{pNSe})) in the sense of Definition \ref{def:weak}. 
\end{theorem}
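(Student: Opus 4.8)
The plan is to assemble the ingredients already established---the convergent subsequence from Proposition \ref{conv}, the weak evolution identity \eqref{eq:weakderivative} from Lemma \ref{pvpconv}, the identification $\chi=|\cD(v)|^{p-2}\cD(v)$ from Lemma \ref{intconv}, and the chain rule from Proposition \ref{timedev}---and to verify that the limit $v$ meets every requirement in Definition \ref{def:weak}. First I would record that Proposition \ref{conv} already gives $v\in L^{\infty}(0,T;U_p(\Omega))\cap L^p(0,T;W)$, which is exactly the claimed regularity, and that membership in $L^p(0,T;W)$ encodes $\nabla\cdot v=0$ weakly; integrating by parts against $\psi\in C_c^{\infty}(\overline{\Omega}\times[0,T))$ and using $v=0$ on $\partial\Omega$ then yields the constraint $\int_0^T\int_{\Omega}\nabla\psi\cdot v\,dxdt=0$.

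Next I would dispatch the $L^p$ time-continuity requirement. Lemma \ref{shiftest} furnishes the bound $\|\tau_hv^N-v^N\|_{L^p(0,T-h;L^p)}^p\le Ch$ with $C$ independent of $N$. Because $v^{N_k}\to v$ strongly in $L^p(0,T;L^p)$ by Proposition \ref{conv}, both restrictions $v^{N_k}|_{[0,T-h]}$ and $\tau_hv^{N_k}$ converge strongly, so passing to the limit in $k$ preserves the inequality and gives $\|\tau_hv-v\|_{L^p(0,T-h;L^p)}^p\le Ch$; letting $h\to0^+$ closes this point.

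The core is the weak formulation. I would start from \eqref{eq:weakderivative} with the test function $w=\varphi$, where $\varphi\in C_c^{\infty}(\Omega\times[0,T))$ is divergence free (so that $\cP\varphi=\varphi$ and the pairing with $\partial_t\cP v_p$ reduces to one with $\partial_t v_p$), and then unfold the distributional time-derivative pairing. By Definition \ref{weaktime} together with the fact, already used in the proof of Lemma \ref{pvpconv}, that $v_p^N(\cdot,0)=|v^N(0)|^{p-2}v^N(0)\to|v_0|^{p-2}v_0$, the term $\langle\varphi,\partial_t v_p\rangle$ expands as $-\int_0^T\int_{\Omega}v_p\cdot\partial_t\varphi\,dxdt-\int_{\Omega}|v_0|^{p-2}v_0\cdot\varphi(x,0)\,dx$. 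Substituting this and replacing $\chi$ by $|\cD(v)|^{p-2}\cD(v)$ via Lemma \ref{intconv} reproduces \eqref{ws} up to an overall sign. A density argument then extends the identity from smooth $\varphi$ to the full class required in Definition \ref{def:weak}.

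Finally, the energy dissipation equality follows by taking $w=v$ in \eqref{eq:weakderivative}: the convective term vanishes (as shown inside the proof of Lemma \ref{intconv}) and $\chi=|\cD(v)|^{p-2}\cD(v)$ gives $\langle v,\partial_t v_p\rangle=-\nu\int_0^T\int_{\Omega}|\cD(v)|^p\,dxdt$; combined with the chain rule $\langle v,\partial_t v_p\rangle=H(T)-H(0)=\frac1q(\|v(T)\|_{L^p}^p-\|v_0\|_{L^p}^p)$ from Proposition \ref{timedev}, this is precisely \eqref{eq:energyapriori}. For the global statement I would exhaust $(0,\infty)$ by $T_m=m$ and diagonalize the subsequences, which is legitimate because the a priori bounds in Proposition \ref{vbound} are uniform in both $N$ and $T$. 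I expect the only genuinely delicate point to be the correct appearance of the initial-value term $\int_{\Omega}|v_0|^{p-2}v_0\cdot\varphi(x,0)\,dx$---it hinges on the distributional integration by parts in time and on the convergence of the discrete initial data $v^N(0)\to v_0$---but this is already prepared in Lemma \ref{pvpconv}, so the remaining work is assembly rather than new estimation.
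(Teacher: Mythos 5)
Your proposal is correct and takes essentially the same approach as the paper: it assembles Proposition \ref{conv}, Lemma \ref{shiftest}, Lemma \ref{pvpconv}, Lemma \ref{intconv}, and a diagonal argument in $T$, exactly as the paper's proof of Theorem \ref{mainthm} does. The only cosmetic difference is that you obtain \eqref{ws} by unfolding the weak time derivative in \eqref{eq:weakderivative} (using the initial-data identification from the proof of Lemma \ref{pvpconv}), whereas the paper passes to the limit directly in the Galerkin identity tested against $\varphi$ --- if anything your routing is slightly more careful, since the paper's version glosses over the projection $Q_{N_k}\varphi$ of the test function.
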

\begin{proof}
For any $\psi\in C_c^{\infty}(\Omega\times[0,T))$, one has
\[
\int_0^T\int_{\Omega}\nabla\psi\cdot v^{N_k} dxdt=0,
\]
since $v^{N_k}$ is divergence free and disappears on the boundary. Using Lemma \ref{conv}, sending $k$ to $\infty$, one has
\[
\int_0^T\int_{\Omega}\nabla\psi\cdot v dxdt=0.
\]
 For any $\varphi\in C_c^{\infty}(\Omega\times [0,T),\mathbb{R}^d)$, one has
\begin{multline*}
 \int_0^T\int_{\Omega}v_p^{N_k}\cdot \partial_t \varphi dxdt+\int_0^T\int_{\Omega}\nabla\varphi : (v^{N_k}\otimes v_p^{N_k})dxdt \\
 -\nu\int_0^T\int_{\Omega}\nabla\varphi : \mathcal{D}(v^{N_k})|\mathcal{D}(v^{N_k})|^{p-2}dxdt +\int_{\Omega}|v_0^{N_k}|^{p-2}v_0^{N_k}\cdot\varphi(x,0)dx=0.
\end{multline*}
 Again using Lemma \ref{conv}, sending $k$ to $\infty$, one gets
 \begin{multline*}
\int_0^T\int_{\Omega}v_p\cdot \partial_t \varphi dxdt+\int_0^T\int_{\Omega}\nabla\varphi : (v\otimes v_p)dxdt \\
-\nu\int_0^T\int_{\Omega}\nabla\varphi : \mathcal{D}(v)|\mathcal{D}(v)|^{p-2}dxdt+\int_{\Omega}|v_0|^{p-2}v_0\cdot\varphi(x,0)dx=0.
\end{multline*}
For the time regularity, note that for some $C$ independent of $N_k$,
\[
\|\tau_hv^{N_k}-v^{N_k}\|_{L^p(0,T-h;L^p(\Omega;\mathbb{R}^d))}^p\leq Ch,
\] 
by Lemma \ref{shiftest}. Using the fact that $v^{N_{k}} \rightarrow v$ strongly in $L^p(0,T;L^p(\Omega;\mathbb{R}^d))$, one then has
\[
\|\tau_hv-v\|_{L^p(0,T-h;L^p(\Omega;\mathbb{R}^d))}^p\leq Ch.
\]
Thus the time regularity is proved. From all above, we proved that $v$ is a weak solution to the $p$-Navier-Stokes problem for given $T$.

Note that $T$ is arbitrary, one may use a diagonal argument to extract a subsequence of $v^{N_k}$ that converges in the sense listed in Proposition \ref{conv} for every $[0, n]$. The limit of this subsequence is then a weak solution on any bounded interval and thus a global weak solution.
\end{proof}

Moreover, we have the following the energy dissipation law for the weak solutions, which follows directly from Proposition \ref{timedev} and the argument in the proof of Lemma \ref{intconv}.
\begin{proposition}
It holds that
\begin{gather}
H(t)-H(s)=-\nu\int_s^t \int_{\Omega}|\cD(v)|^p\,dxd\tau.
\end{gather}
\end{proposition}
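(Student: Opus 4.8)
The plan is to localize to an arbitrary subinterval $[s,t]$ the global energy identity that was already extracted inside the proof of Lemma \ref{intconv}, and then to feed the result into the chain rule of Proposition \ref{timedev}. Recall that Proposition \ref{timedev} furnishes, for every $0\le s\le t\le T$,
\[
H(t)-H(s)=\int_s^t\int_{\Omega}v(\tau)\,\partial_t v_p(\tau)\,dxd\tau,
\]
where the right-hand side is understood as the action of $\partial_t v_p\in V'$ on $1_{[s,t]}v\in V$. Thus it suffices to evaluate this pairing over $[s,t]$.

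First I would substitute $w=1_{[s,t]}v$ into the weak time-derivative identity \eqref{eq:weakderivative}. This is legitimate because $1_{[s,t]}v\in V=L^p(0,T;W)$, and \eqref{eq:weakderivative} holds for all $w\in V$. Since the temporal cutoff commutes with the spatial gradient, $\nabla(1_{[s,t]}v)=1_{[s,t]}\nabla v$, and \eqref{eq:weakderivative} collapses to
\[
\int_s^t\int_{\Omega}v\,\partial_t v_p\,dxd\tau-\int_s^t\int_{\Omega}\nabla v:v\otimes v_p\,dxd\tau+\nu\int_s^t\int_{\Omega}\nabla v:\chi\,dxd\tau=0.
\]

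Next I would dispose of the two remaining integrals using results already in hand. The convection term vanishes: the mollification argument carried out in the proof of Lemma \ref{intconv} shows that $\int_{\Omega}\nabla v(\tau):v(\tau)\otimes v_p(\tau)\,dx=0$ for a.e.\ $\tau$, so its restriction to $[s,t]$ is zero as well. For the diffusion term I would invoke the identification $\chi=|\cD(v)|^{p-2}\cD(v)$ from Lemma \ref{intconv}; since $\chi$ is symmetric, $\nabla v:\chi=\cD(v):\chi=|\cD(v)|^{p}$. What remains is
\[
\int_s^t\int_{\Omega}v\,\partial_t v_p\,dxd\tau=-\nu\int_s^t\int_{\Omega}|\cD(v)|^{p}\,dxd\tau,
\]
and combining this with the chain rule above gives exactly $H(t)-H(s)=-\nu\int_s^t\int_{\Omega}|\cD(v)|^{p}\,dxd\tau$.

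The point requiring care is the localization itself. One must confirm that inserting the non-smooth test function $1_{[s,t]}v$ into \eqref{eq:weakderivative} is consistent with the duality between $V'$ and $V$ used to state the chain rule, i.e.\ that $\langle 1_{[s,t]}v,\partial_t v_p\rangle_{L^2_t(0,T;L^2_x(\Omega))}$ is precisely $\int_s^t\int_{\Omega}v\,\partial_t v_p\,dxd\tau$; this is where the two formulations must be reconciled. One should also note that the Leray projection $\cP$ appearing in front of $v_p$ in \eqref{eq:weakderivative} may be dropped, since $w=1_{[s,t]}v$ is divergence free and vanishes on $\partial\Omega$, so that the gradient part of $v_p$ pairs to zero. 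Beyond these bookkeeping matters, every ingredient is a restriction to $[s,t]$ of an estimate already established on $[0,T]$, so no new bound is needed.
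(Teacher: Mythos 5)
Your proof is correct and is essentially the paper's own argument: the paper disposes of this proposition in one line by combining the chain rule of Proposition \ref{timedev} with the computation in the proof of Lemma \ref{intconv} (testing \eqref{eq:weakderivative}, killing the convection term via the mollification argument, and using the identification $\chi=|\cD(v)|^{p-2}\cD(v)$), which is precisely what you do, merely localized to $[s,t]$ through the admissible test function $1_{[s,t]}v\in V$. Your explicit care about reconciling the $V'$--$V$ pairing with the chain rule and about dropping the projection $\cP$ (legitimate since $w$ is divergence free with zero trace, so the gradient part pairs to zero) only spells out details the paper leaves implicit.
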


\section*{Acknowledgements}

This work was financially supported by the National Key R\&D Program of China, Project Number 2020YFA0712000 and  2021YFA1001200. The work of Y. Feng was partially supported by Science and Technology Commission of Shanghai Municipality (No. 22DZ2229014). The work of L. Li was partially supported by Shanghai Science and Technology Commission (Grant No. 21JC1403700, 20JC144100), the Strategic Priority Research Program of Chinese Academy of Sciences, Grant No. XDA25010403 and NSFC 12031013. The work of J.-G. Liu was partially supported by NSF DMS-2106988.  The work of X. Xu was partially supported by the NSFC 12101278, and Kunshan Shuangchuang Talent Program kssc202102066.

\appendix

\section{Equivalence of the norms}\label{app:equivnorm}
In this section, we show the following equivalency of the $W^{1,p}$ norm:
\begin{equation}\label{eq:Wpnorm}
C'(\|u\|_{L^p}+\|\mathcal{D}(u)\|_{L^p}) \leq \|u\|_{L^p}+\|D u\|_{L^p}\leq C(\|u\|_{L^p}+\|\mathcal{D}(u)\|_{L^p}),
\end{equation}
where $C, C'$ are constants, $\mathcal{D}(u)=\frac{1}{2}(\nabla u+\nabla u^{T})$. The proof can be found on \cite{carlos1992}, Proposition 1.1, for completion, we sketch the main steps here.

The first inequality of \eqref{eq:Wpnorm} is trivial. For the second inequality, observe that one has the following relations between $u$ and $\mathcal{D}(u)$:
\[
\mathcal{D}(u)_{ij}=\frac{\partial u_i}{\partial x_j}+\frac{\partial u_j}{\partial x_i},
\]
\[
\frac{\partial^2 u_i}{\partial x_j\partial x_k}=\frac{\partial \mathcal{D}(u)_{ik}}{\partial x_j}+\frac{\partial\mathcal{D}(u)_{ij}}{\partial x_k}-\frac{\partial\mathcal{D}(u)_{jk}}{\partial x_i}.
\]
Hence, suppose $\|u\|_{L^p}+\|\mathcal{D}(u)\|_{L^p}$ is finite, then we have $\frac{\partial^2 u_i}{\partial x_j\partial x_k}\in W^{-1,p}$. By Theorem 1.1 in \cite{carlos1992}, 
\[
\|\partial_j u_i\|_{L^p}\le C(\|\partial_j u_i\|_{W^{-1,p}}+\|\nabla \partial_ju_i\|_{W^{-1, p}})
\]
so that $\partial_j u_i\in L^p$. By Open Mapping Theorem, these two norms are equivalent.

\bibliographystyle{plain}
\bibliography{pEuler_Galerkin_Symmetric}

\begin{thebibliography}{10}

\bibitem{bblWebsite}
\url{https://www.claymath.org/millennium-problems}.

\bibitem{arnold1966geometrie}
Vladimir Arnold.
\newblock Sur la g{\'e}om{\'e}trie diff{\'e}rentielle des groupes de lie de
  dimension infinie et ses applications {\`a} l'hydrodynamique des fluides
  parfaits.
\newblock In {\em Annales de l'institut Fourier}, volume~16, pages 319--361,
  1966.

\bibitem{arnold2008topological}
Vladimir~I Arnold and Boris~A Khesin.
\newblock {\em Topological methods in hydrodynamics}, volume 125.
\newblock Springer Science \& Business Media, 2008.

\bibitem{bellout95}
H.~Bellout.
\newblock On a special {S}chauder basis for the {S}obolev spaces
  ${W}_0^{1,p}({\Omega}) $.
\newblock {\em Illinois Journal of Mathematics}, 39(2):187--195, 1995.

\bibitem{breit2017existence}
Dominic Breit.
\newblock {\em Existence theory for generalized Newtonian fluids}.
\newblock Academic Press, 2017.

\bibitem{chen2012two}
Xiuqing Chen and Jian-Guo Liu.
\newblock Two nonlinear compactness theorems in {$L^p (0, T; B)$}.
\newblock {\em Applied Mathematics Letters}, 25(12):2252--2257, 2012.

\bibitem{crochet2012numerical}
Marcel~J Crochet, Arthur~Russell Davies, and Kenneth Walters.
\newblock {\em Numerical simulation of non-Newtonian flow}.
\newblock Elsevier, 2012.

\bibitem{dl1998}
L.~Damascelli.
\newblock Comparison theorems for some quasilinear degenerate elliptic
  operators and applications to symmetry and monotonicity results.
\newblock In {\em Annales de l'Institut Henri Poincare (C) Non Linear
  Analysis}, volume~15, pages 493--516. Elsevier, 1998.

\bibitem{evans2022partial}
Lawrence~C Evans.
\newblock {\em Partial differential equations}, volume~19.
\newblock American Mathematical Society, 2022.

\bibitem{fuvcik1972existence}
Svatopluk Fu{\v{c}}{\'\i}k, Old{\v{r}}ich John, and Jind{\v{r}}ich Ne{\v{c}}as.
\newblock On the existence of schauder bases in sobolev spaces.
\newblock {\em Commentationes Mathematicae Universitatis Carolinae},
  13(1):163--175, 1972.

\bibitem{galdi2011}
G.~P Galdi.
\newblock {\em An introduction to the mathematical theory of the
  {N}avier-{S}tokes equations: Steady-state problems}.
\newblock Springer Science \& Business Media, 2011.

\bibitem{liliu17}
L.~Li and J.-G. Liu.
\newblock $p$-{Euler} equations and $ p $-{Navier}-{Stokes} equations.
\newblock {\em J. Differ. Equations}, 264(7), 2018.

\bibitem{lions1996mathematical}
Pierre-Louis Lions.
\newblock {\em Mathematical Topics in Fluid Mechanics: Volume 2: Compressible
  Models}, volume~2.
\newblock Oxford University Press on Demand, 1996.

\bibitem{liu2021existence}
Jian-Guo Liu and Zhaoyun Zhang.
\newblock Existence of global weak solutions of $ p $-navier-stokes equations.
\newblock {\em Discrete and Continuous Dynamical Systems-B}, 27(1):469--486,
  2021.

\bibitem{carlos1992}
C.~Par\'es.
\newblock Existence, uniqueness and regularity of solution of the equations of
  a turbulence model for incompressible fluids.
\newblock {\em Applicable Analysis}, 43(3-4):245--296, 1992.

\bibitem{rr04}
M.~Renardy and R.~C Rogers.
\newblock {\em An introduction to partial differential equations}, volume~13.
\newblock Springer Science \& Business Media, 2006.

\bibitem{simon1986compact}
Jacques Simon.
\newblock Compact sets in the space {$L^p (0, T; B)$}.
\newblock {\em Annali di Matematica pura ed applicata}, 146:65--96, 1986.

\bibitem{tartar2006introduction}
Luc Tartar.
\newblock {\em An introduction to Navier-Stokes equation and oceanography},
  volume~1.
\newblock Springer, 2006.

\bibitem{vazquez2007porous}
Juan~Luis V{\'a}zquez.
\newblock {\em The porous medium equation: mathematical theory}.
\newblock Oxford University Press on Demand, 2007.

\bibitem{vdbh2009}
D.~Veiga and H.~Beirao.
\newblock {Navier}--{Stokes} equations with shear thinning viscosity regularity
  up to the boundary.
\newblock {\em Journal of Mathematical Fluid Mechanics}, 11(2):258--273, 2009.

\end{thebibliography}
\end{document}